\address{\newline{\normalsize Laboratory of AGHA, Moscow Institute of Physics and Technology, 9 Institutskiy per., Dolgoprudny,
Moscow Region, 141701, Russia}
\newline{\it E-mail address}: karzhemanov.iv@mipt.ru}
\makeatletter\@addtoreset{equation}{section}\makeatother
\renewcommand{\theequation}{\thesection.\arabic{equation}}
\renewcommand{\thesubsection}{\bf\thesection.\arabic{equation}}
\makeatletter\@addtoreset{subsection}{equation}\makeatother
\newtheorem{theorem}[equation]{Theorem}
\newtheorem{prop}[equation]{Proposition}
\newtheorem{lemma}[equation]{Lemma}
\newtheorem{cor}[equation]{Corollary}
\newtheorem{conj}[equation]{Conjecture}
\theoremstyle{definition}
\newtheorem{example}[equation]{Example}
\newtheorem{definition}[equation]{Definition}
\theoremstyle{remark}
\newtheorem{remark}[equation]{Remark}
\renewcommand{\theenumi}{\arabic{enumi}}
\renewcommand{\theenumi}{\arabic{enumi})}
\newcommand{\com}{\mathbb{C}}
\newcommand{\cel}{\mathbb{Z}}
\newcommand{\ra}{\mathbb{Q}}
\newcommand{\re}{\mathbb{R}}
\newcommand{\na}{\mathbb{N}}
\newcommand{\aut}{\text{Aut}}
\newcommand{\p}{\mathbb{P}}
\newcommand{\pic}{\text{Pic}}
\newcommand{\fie}{\textbf{k}}
\newcommand{\spe}{\text{Spec}}
\newcommand{\cha}{\text{char}}
\newcommand{\map}{\longrightarrow}
\newcommand{\ve}{\mathcal{E}}
\title{On characterization of toric varieties}
\author{Ilya Karzhemanov}
\thanks{{\it MS 2010 classification}: 14M25, 14E30}
\thanks{{\it Key words}: toric variety, Picard number, log pair}
\begin{document}

\begin{abstract}
We study the conjecture due to V.\,V. Shokurov on characterization
of toric varieties. We also consider one generalization of this
conjecture. It is shown that none of the characterizations holds
in dimensions $\ge 3$ (yet some weaker versions of the
conjecture(s) are verified). In addition, we comment on the recent
paper \cite{brown}, claiming a ``\,proof\," of the conjecture, in
the Appendix at the end.
\end{abstract}

\maketitle

\bigskip

\section{Introduction}
\label{section:int}

\refstepcounter{equation}
\subsection{}
\label{subsection:int-1}

There is an abundance of results on characterization of algebraic
varieties with a transitive group action. Some of those we are
familiar with are \cite{mori-harts}, \cite{kollar-kachi},
\cite{wahl}, \cite{fulton-et-al} (for projective spaces and
hyperquadrics) and \cite{hwang-mok}, \cite{wiesh} (for
Grassmannians and other Hermitian symmetric spaces). At the same
time, not very much is known in this respect for varieties with
other, less transitive group actions. Amongst the first that come
into ones mind are toric varieties and, more generally, reductive
varieties (see \cite{al-bri-1}, \cite{al-bri-2} (and also
\cite{bri-1}, \cite{per}) for foundations of the reductive (resp.
spherical) theory). In the same spirit, there is a related
Hirzebruch problem on describing all compactifications of
$\com^n$, the topic studied in numerous papers (see
\cite{pet-shneider}, \cite{furush-1}, \cite{furush-2} and
\cite{prokhor-coin-3} for instance). Postponing the discussion of
all these matters until some other time let us focus on the toric
case.

To begin with, we mention the paper \cite{ked-wie} (which
elaborates on \cite{jac-94}), where an arbitrary smooth complete
toric variety $T$ is characterized by the property that certain
sheaf $\mathcal{R}_T\in\text{Ext}^1(\mathcal{O}_T^{\oplus
h^{1,1}(X,\com)},\Omega^1_T)$ (called \emph{potential}) splits
into a direct sum of line bundles $\mathcal{O}_T(-D_{\alpha})$.
Unfortunately, this is not quite an effective criterion, and we
are up to a ``\,numerical\,'' one.

\refstepcounter{equation}
\subsection{}
\label{subsection:int-2}

Let $X \longrightarrow Z\ni o$ be an algebraic variety
\footnote{~All varieties, if not specified, are assumed to be
normal, projective and defined over $\com$. We will also be using
freely the notions and facts from the minimal model theory (see
\cite{BCHM}, \cite{kol-mor}, \cite{kollar-et-al}).} over a scheme
(germ) $Z\ni o$. Put $n := \dim X$ and let $n\ge 2$ in what
follows.

Consider a $\ra$\,-\,boundary $D := \sum d_i D_i$, where $D_i$ are
(not necessarily prime) effective Weil divisors on $X$, $0\leq
d_i\leq 1$. Through the rest of the paper $X$ and $D$ will be
subject to the following constraints:

\begin{itemize}

\item the pair $(X,D)$ is log canonical (or \emph{lc} for short),

\item the divisor $-(K_X+D)$ is nef,

\item singularities of $X$ are $\ra$\,-\,factorial (or \emph{$X$ is $\ra$\,-\,factorial}).

\end{itemize}
The last condition is actually redundant (and the first one is too
general) for the forthcoming considerations. Indeed, one may
always apply a dlt modification $h: \tilde{X}\map X$ with
$\ra$\,-\,factorial $\tilde{X}$, divisorially log terminal
$(\tilde{X},h_*^{-1}(D))$ and $K_{\tilde{X}}+h_*^{-1}(D) + \Sigma
\equiv h^*(K_X+D)$ for an $h$\,-\,exceptional divisor
$\Sigma$.\footnote{~$\equiv$ stands for the numerical
equivalence.} Replacing the pair $(X,D)$ by
$(\tilde{X},h_*^{-1}(D) + \Sigma)$ does not effect the arguments
of the present paper.

Let $N^1(X)$ be the N\'eron\,-\,Severi group of $X$. One defines
the number $r(X,D)$ as the dimension of the $\ra$\,-\,vector
subspace in $N^1(X)\otimes\ra$ spanned by all the $D_i$
(alternatively, when one drops the $\ra$\,-\,factoriality
assumption, $r(X,D)$ is defined as the dimension of the
$\ra$\,-\,vector subspace spanned by all the $D_i$ modulo
algebraic equivalence).

Note that $r(X,D)\leq\rho(X) =$ the Picard number of $X$. A finer
relation between these gadgets is provided by the following:

\begin{conj}[V.\,V. Shokurov]
\label{theorem:slava} For $(X/Z\ni o,D)$ as above the estimate
$\sum d_i \leq r(X,D) + \dim X$ holds. Moreover, the equality is
achieved iff the pair $(X,D')$ is (formally) toric for some $D'
\ge \llcorner D\lrcorner$, i.\,e. $X$ is toric and $D'$ is its
boundary (all over $Z\ni o$).
\end{conj}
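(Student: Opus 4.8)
The plan is to split the assertion into the inequality and the two directions of the equivalence, working throughout with the dlt $\ra$-factorial model $(\tilde{X},h_*^{-1}(D))$ already recorded, since $h$ changes neither $\sum d_i$, nor $r(X,D)$, nor formal toric-ness. Write $c(X,D) := r(X,D) + n - \sum d_i$ for the quantity in question. The direction \emph{toric} $\Rightarrow$ \emph{equality} is a direct computation: if $X$ is toric with $\llcorner D\lrcorner = \partial X = \sum_\alpha D_\alpha$ the invariant boundary, the character sequence $0\map M\map \cel^{\Sigma(1)}\map \pic(X)\map 0$ gives $\#\Sigma(1) = n + \rho(X)$, the $D_\alpha$ span $N^1(X)\otimes\ra$ so that $r(X,D)=\rho(X)$, and every coefficient equals $1$; hence $\sum d_\alpha = n+\rho(X) = n + r(X,D)$, i.e. $c=0$, and the inequality then forces $D=\partial X$ exactly.

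For the inequality $c(X,D)\ge 0$ I would induct on $n$ and, for fixed $n$, on $\rho(X)$. The hypothesis that $-(K_X+D)$ is nef makes $(X,D)$ of Fano type over $Z$; running a $K_X$-MMP over $Z$ lowers $\rho$ and terminates in a Mori fibre space $\pi\colon X'\map Y$, and one checks that $c$ does not increase under the divisorial contractions and flips (a component either survives with its coefficient, or is contracted and its coefficient absorbed by adjunction on the exceptional locus). On $X'$ one separates the $D_i$ into those dominating $Y$ and those pulled back from $Y$; restricting to a general fibre $F$ (a $\ra$-factorial Fano of Picard number one) via adjunction bounds the horizontal contribution by $\dim F + 1$, while the vertical part is controlled by the lower-dimensional pair on $Y$. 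Combining the two estimates and inducting yields $c\ge0$.

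The substance of the theorem is \emph{equality} $\Rightarrow$ \emph{toric}. Assuming $c(X,D)=0$, sharpness must propagate through the entire MMP and through every adjunction restriction above, so by induction each fibre $F$ and each base $Y$ is toric with its full invariant boundary, and the $n+r$ components of $D$ are numerically independent modulo $\pi$ and meet the strata in the expected codimensions. The plan is then to reconstruct a global fan: use the $n+r$ boundary divisors, their span in $N^1(X)\otimes\ra$ and the combinatorics of the intersections $D_{i_1}\cap\cdots\cap D_{i_k}$ to build a torus-invariant open stratum carrying a free torus action, to extend this action across the boundary using the dlt stratification together with the numerical triviality $K_X+\llcorner D\lrcorner\equiv 0$, and finally to upgrade the resulting birational toric contraction to an isomorphism of pairs by the negativity lemma and the lc condition.

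The main obstacle is precisely this last reconstruction step. Equality pins down the \emph{number} of boundary divisors, their coefficients and their linear span, but not, a priori, their mutual incidence, the irreducibility and expected dimension of the higher-codimension strata $D_{i_1}\cap\cdots\cap D_{i_k}$, or the monodromy of the candidate torus bundle over the big open stratum; in dimension $\ge 3$ the numerical data alone leaves room for the strata to degenerate or the gluing to be non-standard. I would attempt to exclude this by an inductive analysis of the stratification combined with a connectedness/vanishing argument for the lc centres, but I expect the faithful recovery of the torus action from the boundary combinatorics — rather than the inequality — to be the genuine difficulty, and the point at which the characterization in higher dimension is most fragile.
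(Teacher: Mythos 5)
There is a fundamental problem with your proposal that no amount of technical repair can fix: the statement you are trying to prove is stated in the paper as a \emph{conjecture}, and the paper's main result refutes it in every dimension $n\ge 3$. Theorem~\ref{theorem:main-1} together with Remark~\ref{remark:when-comp-is-0} produces a pair $(X,D)$ with integral boundary, $K_X+D\equiv 0$ and $ac(X,D)=0$, so that $\sum d_i=\rho(X)+\dim X\ge r(X,D)+\dim X$ and equality in Shokurov's estimate is forced, while Proposition~\ref{theorem:non-toric} shows $X$ is non-toric. Concretely, $X$ is obtained from $W=\p(\ve)$, where $\ve$ is the unique rank-$2$ bundle on $\p^2$ with $c_1=0$, $c_2=1$ (equivalently, $W$ is the blowup of a smooth quadric $Q\subset\p^4$ along a line), by two successive Galois $\cel\slash 2\cel$-covers coming from \emph{non-commuting} lifts to $W$ of the involutions \eqref{g-i-actions}; the non-commutativity produces two distinct half-integral ramification divisors $\widetilde{R_1},\widetilde{R_2}$ in the Hurwitz formula, which is exactly what allows an lc boundary achieving equality on a non-toric total space. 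Non-toricness is then verified by fan combinatorics: if $X$ were toric, the extremal contraction $\alpha:X\map\p(1,1,1,2)$ of Lemma~\ref{theorem:l-0-tilde-b-p-free} would be toric, and the four $\frac{1}{2}(1,1,1)$-points of $X$ (two off the curve $\widetilde{F}$ and off $\widetilde{\Sigma}$) are incompatible with $\p(1,1,1,2)$ having a single singular point. Products with $(\p^1)^{n-3}$ extend this to all $n\ge 4$. So the step you yourself flag as "the genuine difficulty" --- recovering the torus action from the equality $c(X,D)=0$ via the boundary combinatorics --- is not merely hard; it is false, and the degeneration of "incidence/monodromy" data you hoped to exclude by a connectedness/vanishing argument for lc centres actually occurs.

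Where your proposal overlaps with what is true, it is close to the paper's content: the direction toric $\Rightarrow$ equality is the easy computation recorded immediately after the conjecture, and your MMP-plus-adjunction scheme for the inequality is essentially how the paper proves Proposition~\ref{theorem:prox-comp} (due to Prokhorov) --- run the $K_X$-MMP, pass to a Mori fibre space, restrict to a horizontal component $D_0$ via the different, use $c(D_0^{\nu},B^{\nu})\le c(X,D)$ and induct --- but only under the extra hypotheses $K_X+D\equiv 0$ and $d_i=1$ for all $i$; in full generality even the inequality $c(X)\ge 0$ remains open (it is part~\ref{cl-1} of Conjecture~\ref{theorem:james}, which the paper does not prove). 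The characterization itself is known only in dimension $2$ (Shokurov), in the local case $Z=X$ (Koll\'ar et al., as in Example~\ref{example:ex-1}), for plt $3$-folds with $K_X+D\equiv 0$ (Prokhorov), and for smooth $X$ (Yao); note the paper's counterexample is singular, consistent with the last result. To be salvageable, your argument would have to be restricted to one of these settings, and your final reconstruction step replaced by the observation that in general it cannot succeed.
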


Clearly, when $X$ is a genuine toric variety and $D$ is its
boundary, with $D_i$ being irreducible and $d_i = 1$ for all $i$,
then $K_X + D \sim 0$ and $\sum d_i = r(X,D) + \dim X$, thus
motivating the last statement of Conjecture~\ref{theorem:slava}.
Let us give another

\begin{example}[{cf. \cite[Example 1.3]{pro-1}}]
\label{example:ex-1} Take $Z := X$ for $(X/X \ni o, D)$ being a
singularity germ. Let $X := (xy + zt = 0)\subset\com^4$ and $D :=
(xy = 0) \cap X$. Then we have $r(X,D) = 1$ and $K_X = 0$. Hence
$\sum d_i = r(X,D) + \dim X$ and $X \ni o$ is obviously toric. In
fact this is not a coincidence as the local version (i.\,e. with
$Z = X$) of Conjecture~\ref{theorem:slava} has been proved in
\cite[18.22\,--\,18.23]{kollar-et-al}. Namely, given $(X/X \ni o,
D)$ such that each $D_i$ is $\ra$\,-\,Cartier, the estimate $\sum
d_i \le \dim X$ holds. Moreover, in the case of equality one has
$X \simeq (\com^n\ni 0)/\frak{A}$, where $\frak{A}$ is a finite
abelian group acting diagonally on $\com^n$ and $D_i$ correspond
to $\frak{A}$\,-\,invariant hyperplanes $(x_i=0)\subset\com^n$
under the factorization morphism $\com^n\map X$. This implies that
the pair $(X,\llcorner D\lrcorner)$ is toric.
\end{example}

\begin{remark}
\label{remark:formal-rel} In view of Example~\ref{example:ex-1},
it is tempting to ask whether $X$ (and $D$) in
Conjecture~\ref{theorem:slava} is by any means related to a toric
variety --- say, whether $(X,\llcorner D\lrcorner)$ is
\emph{formally} (not necessarily regularly or even analytically)
isomorphic to a toric pair? A $\cha\,p > 0$ version of
Conjecture~\ref{theorem:slava} might also be of some interest: for
instance, when the ground field is $\overline{\mathbb{F}}_p$, is
$(X,\llcorner D\lrcorner)$ a toric pair (up to the Frobenius
twist)? Finally, one may consider a weaker version of
Conjecture~\ref{theorem:slava}, with ``$X$ is toric" replaced by
``\,$X$ admits a torification\," (compare with e.\,g.
\cite{lopez-pena-lorscheid}).
\end{remark}

\refstepcounter{equation}
\subsection{}
\label{subsection:int-3}

On trying to probe Conjecture~\ref{theorem:slava} one may assume
that $Z \ne X$ (cf. Example~\ref{example:ex-1}). Then replacing
$Z$ by a formal neighborhood of $o$ we are led to the case of $Z =
\spe\,\com[[t]]$. Taking an embedding
$\com[[t]]\hookrightarrow\com$ one may assume that actually $Z =
o$.

The first proof of Conjecture~\ref{theorem:slava}, for $\dim X =
2$, has appeared in \cite[Theorem 6.4]{slava-com}. The case when
$\dim X = 3$, the pair $(X,D)$ is plt and $K_X+D \equiv 0$ was
treated in \cite{pro-2}, and Conjecture~\ref{theorem:slava} has
been proved in full there under the stated
conditions.\footnote{~Note that under the extremal condition $\sum
d_i = r(X,D) + \dim X$ all (potentially toric) threefolds in
\cite{pro-2} turned out to be actually toric and \emph{smooth}
(see \cite[Theorem 1.2]{pro-2}).} Let us also point out a
birational (``\,rough\,'') version of
Conjecture~\ref{theorem:slava} (cf. Theorem~\ref{theorem:main}
below) proved in \cite{pro-1} assuming the \emph{Weak Adjunction
Conjecture}. Finally, a general strategy towards the proof of
Conjecture~\ref{theorem:slava}, and more, was developed in
\cite{james} and illustrated there (in passing) at some crucial
points. We now recall some of the matters from \cite{james}.

First of all one generalizes Conjecture~\ref{theorem:slava} as
follows. Put
$$
c(X,D) := r(X,D) + \dim X - \sum d_i
$$
(the quantity $c(X,D)$ was called in \cite{james} the
\emph{complexity} of $(X,D)$). Similarly, one defines
$$
ac(X,D) := \rho(X) + \dim X - \sum d_i
$$
(the \emph{absolute complexity}), and let us also introduce
\begin{eqnarray}
\nonumber c(X) := \inf\{c(X,D)\ \vert \ D \ \text{is a
boundary on} \ X \ \text{such that}\\
\nonumber \text{the divisor}\ -(K_X+D) \ \text{is nef and the
pair}\ (X,D)\ \text{is lc}\}
\end{eqnarray}
for consistency, and analogously $ac(X)$ with $c(X,D)$ replaced by
$ac(X,D)$. Then one observes that for $d_i$ all integer the
condition $c(X,D) = 0$ (so that the pair $(X,D)$ is toric
according to Conjecture~\ref{theorem:slava}) is equivalent to
$c(X,D)<1$. This led the author of \cite{james} to make his

\begin{conj}[J. McKernan]
\label{theorem:james} For $(X,D)$ as above the following holds:
\begin{enumerate}
\item\label{cl-1} $c(X)\ge 0$,
\smallskip
\item\label{cl-2} if $ac(X,D)<2$, then $X$ is a rational variety,
\smallskip
\item\label{cl-3} if $c(X,D)<1$, then there is a divisor $D'$ such that the
pair $(X,D')$ is toric. Moreover, $\llcorner D\lrcorner\subseteq
D'$ and $D'-S$ is linearly equivalent to a divisor with support in
$D$, where $S$ is either empty or an irreducible divisor.
\end{enumerate}
\end{conj}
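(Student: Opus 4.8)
The plan is to run the minimal model program and induct on $n = \dim X$, treating the three parts together. By the dlt modification discussed after Conjecture~\ref{theorem:slava} we may assume that $X$ is $\ra$-factorial and $(X,D)$ is dlt. Since $-(K_X+D)$ is nef, I would run a $(K_X+D)$-MMP (using \cite{BCHM}) and verify that each elementary step --- a divisorial contraction, a flip, or a Mori fibre contraction $\pi: X \map Y$ --- does not increase the complexity of the pushed-forward pair while keeping $-(K_X+D)$ nef, following the bookkeeping of \cite{james}. The induction then reduces to two base situations: $K_X+D\equiv 0$ (forced, since $K_X+D$ and $-(K_X+D)$ are then both nef), and a Mori fibre space of relative Picard number one.

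For part (\ref{cl-1}), the inequality $c(X)\ge 0$, i.e. $\sum d_i\le r(X,D)+n$, I would intersect $-(K_X+D)$ with curves generating the extremal rays and reduce to the fibration base case: on a Mori fibre space $\pi: X\map Y$ the general fibre $F$ is a $\ra$-factorial Fano variety, and $(F,D|_F)$ inherits the hypotheses with $c(F,D|_F)\le c(X,D)$ once the components of $D$ that are vertical over $Y$ are separated from those dominating $Y$. Descending the vertical components to $Y$ and applying the inductive hypothesis to both $F$ and $Y$ yields the bound, while the case $K_X+D\equiv 0$ is handled directly, $\sum d_i$ being numerically pinned by the ``degree'' of $-K_X$ against $n+r$.

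Part (\ref{cl-2}) follows the same fibration once the sharper estimate $ac(X,D)<2$ is propagated. On $\pi: X\map Y$ the fibre $F$ is Fano with $\rho(F)$ and $\sum(d_i|_F)$ so constrained that $ac(F,D|_F)<2$; by induction $F$ is rational (in relative dimension one this already forces $F\simeq\p^1$, and in general it should push $F$ toward $\p^k$), while $Y$ is rational by the inductive hypothesis applied to the pushforward pair on $Y$. A rationally connected fibration with rational fibres over a rational base then gives $X$ rational.

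The heart of the matter --- and the main obstacle --- is part (\ref{cl-3}). Here $c(X,D)<1$ leaves almost no slack: the number of available boundary components is just short of the count $n+r$ realized by a genuine toric pair (cf. the remark after Conjecture~\ref{theorem:slava}). The inductive engine is adjunction: choosing a component $S\subseteq\llcorner D\lrcorner$ and forming $(S,D_S)$ via the different gives $c(S,D_S)\le c(X,D)<1$, so by induction $S$ is toric with an explicit toric boundary. The difficulty is to \emph{glue} these toric structures on the boundary divisors (and on the fibres and base of the Mori fibration) into a single torus action on $X$: one must extract $\dim X$ algebraically independent invariant rational functions compatibly from the inductive data, and then recognize the extra divisor $S$ of the statement (empty or irreducible) as the defect between the $n+r$ components predicted by toricity and those actually present in $D$. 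Controlling exactly how the inductively toric divisors meet --- and ruling out that the local toric charts fail to assemble globally --- is precisely where the argument is most delicate, and I would expect this gluing step to be the genuine crux, and, in dimension $\ge 3$, a plausible point of failure.
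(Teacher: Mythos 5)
You set out to prove a statement that is, in this paper, a \emph{conjecture} --- and the paper's main result is that it is \emph{false} for every $\dim X\ge 3$, so there is no proof of it in the paper to compare yours against, only a refutation. Theorem~\ref{theorem:main-1} produces, for each $n\ge 3$, a variety $X\in\frak{T}^{\frak{f},n}$ with a boundary $D$ such that $K_X+D\equiv 0$ and $ac(X,D)=\frac{1}{2}$, hence $c(X,D)\le\frac{1}{2}<1$, while $X$ is non-toric \emph{as a variety} (Proposition~\ref{theorem:non-toric}); consequently no divisor $D'$ whatsoever can make $(X,D')$ toric, and part~(\ref{cl-3}) fails. Concretely, for $n=3$ one takes $W=\p(\ve)$ for the unique indecomposable rank-$2$ bundle $\ve$ on $\p^2$ with $c_1=0$, $c_2=1$ (equivalently, the blowup of a smooth quadric $Q\subset\p^4$ along a line), lifts to $W$ two involutions of $\p^2$ whose lifts do \emph{not} commute, and sets $X:=\proj~\frak{R}_{G_1,G_2}$; the non-commutativity yields two distinct ramification divisors $\widetilde{R_1},\widetilde{R_2}$, each entering with coefficient $\frac{1}{2}$ by Hurwitz, and this is exactly what pushes the complexity below $1$ on a non-toric quotient (Remark~\ref{remark:when-comp-is-0} even adjusts $D$ to reach $ac(X)=0$, refuting the equality case of Conjecture~\ref{theorem:slava} too). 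Your closing instinct --- that the gluing of inductively obtained toric structures is the crux and ``a plausible point of failure in dimension $\ge 3$'' --- is precisely right: Remark~\ref{remark:com-on-mista} locates the failure of McKernan's scheme in the two assertions any such induction implicitly needs, namely that a variety isomorphic to a toric variety in codimension $1$ is toric, and that $K_X+D$ admits an integral complement supported near $D$; neither holds in general (see the weighted-blowup example in the footnote there). So no repair of your gluing step can succeed.

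Two further caveats on the portions of your argument not directly killed by the counterexample (note the counterexample $X$ above is rational, being birational to $\p(1,1,1,2)$, so parts~(\ref{cl-1}) and~(\ref{cl-2}) are not refuted --- but they are also not proved here in full generality). First, your deduction in part~(\ref{cl-2}) that a fibration with rational fibres over a rational base gives a rational total space is false as a general principle (already conic bundles over rational surfaces can be irrational); rationality does not descend through fibrations this way, which is why the paper's positive result in this direction, Theorem~\ref{theorem:main}, requires the stronger hypotheses $K_X+D\equiv 0$, $d_i=1$, $ac(X,D)\le 1$, and proceeds via an explicit pencil construction ({\ref{subsection:pro-3}}) together with the $\rho(X)=1$ classification of Proposition~\ref{theorem:rat-1}, rather than by a formal fibration argument. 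Second, your MMP-plus-adjunction scheme for part~(\ref{cl-1}) does match, in spirit, what the paper actually carries out --- but only under the extra assumptions $K_X+D\equiv 0$ and $d_i=1$ (Proposition~\ref{theorem:prox-comp}, via a horizontal component $D_0$, its normalization, and the different); in the generality of the conjecture, part~(\ref{cl-1}) remains open, and part~(\ref{cl-3}) is simply false.
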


One obviously has the implication Conjecture~\ref{theorem:james}
$\Longrightarrow$ Conjecture~\ref{theorem:slava}.

\begin{remark}
\label{remark:other-path} Note that it is not possible to loose
any of the assumptions in Conjectures~\ref{theorem:slava} and
\ref{theorem:james}. Indeed, for $X:=\p^1\times\p^1\times E$ and
$D := 0\times\p^1\times E+\infty\times\p^1\times E+\p^1\times
0\times E+\p^1\times \infty\times E$, $E$ is an elliptic curve, we
have $c(X,D)=c(X)=1$, but $X$ is not even rationally connected
(cf. Conjecture~\ref{theorem:james},\,\ref{cl-2}). Also, taking
$X:=\mathbb{F}_m$ and $D:=2E_{\infty} +
\displaystyle\sum_{i=1}^{m+2}F_i$, where $E_{\infty}$ is the
negative section and $F_i$ are fibers of the natural projection
$\mathbb{F}_m\map\p^1$ (so that $K_X+D = 0$), we get $c(X,D)\le
0$, but the pair $(X,D)$ is non\,-\,toric (cf.
Conjecture~\ref{theorem:james},\,\ref{cl-3}). At the same time, as
one immediately verifies, slightly more general versions of
Conjectures~\ref{theorem:slava} and \ref{theorem:james} studied in
\cite{slava-com}, \cite{pro-1}, \cite{pro-2} and \cite{james} are
equivalent to those stated above.
\end{remark}

\refstepcounter{equation}
\subsection{}
\label{subsection:int-5}

The aim of the present paper is to show that
Conjecture~\ref{theorem:slava} is not true as stated when $\dim
X\ge 3$ (cf. Remark~\ref{remark:no-tor-sl}). But before spelling
out the details let us give a (one of many possible)

\begin{definition}
\label{theorem:f-toric} We call an algebraic variety $X$
\emph{fake toric} (or \emph{$\frak{f}$\,-\,toric} for short) if
there exists a $\ra$\,-\,boundary $D$ on $X$, a
\emph{quasi\,-\,toric boundary} (or \emph{qt\,-\,boundary}), such
that the pair $(X,D)$ matches all the conditions in
{\ref{subsection:int-2}} (up to passing to a
$\ra$\,-\,factorialization as usual), the pair $(X,\llcorner
D\lrcorner)$ is non\,-\,toric, but $c(X,D) < 1$. Denote by
$\frak{T}^{\frak{f},n}$ the class of all $\frak{f}$\,-\,toric
varieties of dimension $n$.
\end{definition}

Here is our main result:

\begin{theorem}
\label{theorem:main-1} The class
$\frak{T}^{\frak{f},n}\ne\emptyset$ for every $n\ge 3$. More
precisely, there exists $X\in\frak{T}^{\frak{f},n}$ with a
qt\,-\,boundary $D$ such that $K_X+D\equiv 0$, $ac(X,D) =
\displaystyle\frac{3}{4}$.
\end{theorem}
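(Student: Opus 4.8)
The plan is to construct, for each $n \ge 3$, an explicit example $X$ of dimension $n$ together with a $\ra$-boundary $D$ that is a qt-boundary in the sense of Definition~\ref{theorem:f-toric}: that is, $(X,D)$ should satisfy the constraints of~\ref{subsection:int-2}, the pair $(X,\llcorner D\lrcorner)$ should be non-toric, yet $c(X,D) < 1$. Since the strongest claim is $ac(X,D) = \frac{1}{2}$ and $c(X,D) \le ac(X,D)$, it suffices to arrange $ac(X,D) = \frac{1}{2}$ together with non-toricity of $(X,\llcorner D\lrcorner)$. First I would look for the source of non-toricity in a \emph{positive-dimensional obstruction}, most naturally a factor that is not rational: the simplest candidate is a product $X = Y \times A$ where $A$ is an abelian variety (e.g. an elliptic curve $E$) or, more cleverly, a projective bundle over such $A$, since a toric variety is rational and its boundary strata are again toric, so any $X$ carrying an abelian (hence non-toric, non-rational) piece forces $(X,\llcorner D\lrcorner)$ to be non-toric.

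The key steps, in order, are as follows. First, fix the building block in dimension $3$ and then stabilize to all $n \ge 3$ by taking a product with $\p^{n-3}$ (or by a tower of $\p^1$-bundles), equipping the extra factor with a standard toric boundary so that the complexity contributions add correctly. Second, on the chosen $X$ choose $D = \sum d_i D_i$ so that $K_X + D \equiv 0$; this both satisfies the nef condition on $-(K_X+D)$ in the sharpest possible (numerically trivial) way and pins down $\sum d_i$ via the requirement that $D$ numerically cancel $-K_X$. Third, compute the three numerical invariants: $\rho(X)$, the Picard number; $\sum d_i$, read off from the adjunction relation $K_X + D \equiv 0$; and then $ac(X,D) = \rho(X) + \dim X - \sum d_i$. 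The design goal is to tune the coefficients $d_i$ (keeping $0 \le d_i \le 1$ so that lc and the qt-boundary conditions hold, possibly with some $d_i = \frac{1}{2}$) so that this combination lands exactly at $\frac{1}{2}$. Fourth, verify that $(X,D)$ is log canonical — for a product of smooth varieties with a simple-normal-crossings (SNC) boundary this is immediate — and confirm $\ra$-factoriality (or pass to a $\ra$-factorialization, which Definition~\ref{theorem:f-toric} explicitly permits). Fifth, and crucially, verify that $(X,\llcorner D\lrcorner)$ is genuinely \emph{non-toric}; here the abelian factor does the work, because toric varieties are rational while $X$ is not.

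A concrete realization to check first: take $X = \p^1 \times \p^1 \times E$ (already appearing in Remark~\ref{remark:other-path}) and perturb the boundary downward, or better take a $\p^1$-bundle over $\p^1 \times E$ so as to gain one unit of Picard number while keeping the canonical class controllable, then set $D$ to be a sum of sections and fibre divisors with coefficients summing to a half-integer so that $ac = \frac{1}{2}$. The reason for replacing the naive product is precisely Remark~\ref{remark:other-path}: the bald product $\p^1\times\p^1\times E$ gives $c(X,D) = 1$, not $< 1$, so I must spend one extra unit of $\rho$ (via a nontrivial projective bundle, e.g. $\p(\mathcal{O}\oplus L)$ for a suitable line bundle $L$ on the base) to push the complexity strictly below $1$ while landing at the exact value $\frac{1}{2}$.

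The hard part will be the simultaneous fine-tuning: arranging $K_X + D \equiv 0$ with an SNC, lc, qt-boundary whose coefficients are constrained to $[0,1]$ \emph{and} hitting the exact target $ac(X,D) = \frac{1}{2}$, rather than merely some value $< 1$. This forces the elliptic (or abelian) factor to contribute to $K_X$ in a way that must be absorbed by boundary components supported over it — but $E$ has $K_E \equiv 0$ and no canonical toric-type boundary, so the numerical bookkeeping of how $-K_X$ splits between the rational directions and the bundle twist is delicate. I expect the genuine obstacle to be choosing the bundle $L$ (its degree along $E$ versus along $\p^1$) and the coefficient pattern of $D$ so that all of lc-ness, numerical triviality of $K_X+D$, the bound $d_i \le 1$, and the precise value $ac = \frac{1}{2}$ hold at once; non-toricity, by contrast, will follow cheaply from non-rationality of $X$.
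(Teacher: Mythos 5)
Your strategy extracts non-toricity from non-rationality (an elliptic factor) and defers the numerical tuning to the end as ``the hard part''; but that tuning is not a technicality --- it is an actual obstruction, and it cannot be overcome. Suppose, as in your concrete proposal, that $X$ is three-dimensional and admits a surjection $q\colon X\to E$ onto an elliptic curve (true for $Y\times E$ and for any $\p^1$-bundle over $\p^1\times E$, composing with the projections). Since $(X,D)$ is lc and $K_X+D\equiv 0$, the fibration $q$ is lc-trivial, and the canonical bundle formula (Ambro, Fujino--Gongyo) gives $K_X+D\equiv q^*(K_E+B_E+M_E)$ with effective discriminant $B_E$ and nef moduli part $M_E$; taking degrees on $E$ forces $B_E=0$, hence $D$ has no $q$-vertical components (a vertical component of coefficient $d>0$ over $e_0\in E$ would give $\mathrm{mult}_{e_0}B_E\ge d$). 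So every component of $D$ dominates $E$ and restricts to a nonzero effective divisor on the general fiber $X_e$; by adjunction $(X_e,D\vert_{X_e})$ is lc with $K_{X_e}+D\vert_{X_e}\equiv 0$, and Shokurov's two-dimensional theorem (\cite[Theorem 6.4]{slava-com}, the known case recalled in {\ref{subsection:int-3}}) bounds the total coefficient of $D\vert_{X_e}$ by $r(X_e,D\vert_{X_e})+2\le\rho(X_e)+2$. For your $X=\p(\mathcal{O}\oplus L)$ over $\p^1\times E$, the fiber $X_e$ is a Hirzebruch surface, so $\sum d_i\le 4$, while $\rho(X)=3$ and $\dim X=3$; hence $ac(X,D)=\rho(X)+\dim X-\sum d_i\ge 2$ for \emph{every} admissible boundary, no matter how $L$ and the coefficients $d_i$ are chosen. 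The target $ac=\frac{1}{2}$ --- indeed anything $<2$ --- is unreachable, and the same computation explains why the example of Remark~\ref{remark:other-path} sits exactly at $c=1$, $ac=2$: each elliptic direction adds $1$ to $\dim X$ and can never be compensated by boundary, because an effective divisor of degree $0$ on $E$ is zero. In short, an ``abelian piece'' does not make non-toricity cheap; it makes small complexity impossible (the paper's own Proposition~\ref{theorem:prox-comp},\,\ref{it-B} already points in this direction: vanishing complexity forces rational connectedness).

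The missing idea is that any candidate must be rationally connected --- in fact the paper's $X$ is even \emph{birational} to the toric variety $\p(1,1,1,2)$ --- so non-toricity has to be a purely biregular phenomenon, invisible to rationality-type invariants. The paper takes $W$ equal to the blowup of a smooth quadric $Q\subset\p^4$ along a line (realized as $\p(\ve)$ for the unique rank-$2$ bundle $\ve$ on $\p^2$ with $c_1=0$, $c_2=1$), lets two \emph{non-commuting} involutions act on $W$, and defines $X$ as the Proj of the ring of invariants: the non-commutativity produces two distinct ramification divisors $\widetilde{R_1},\widetilde{R_2}$, each entering the Hurwitz formula with coefficient $\frac{1}{2}$, and this is precisely what drives $ac(X,D)$ down to $\frac{1}{2}$. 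Non-toricity is then proved not via rationality but by showing that the configuration of singularities of $X$ (a curve of transversal $A_1$-singularities together with four $\frac{1}{2}(1,1,1)$-points, two of them off that curve) cannot be realized by any fan structure compatible with the contraction to $\p(1,1,1,2)$. Your final step (multiplying by $(\p^1)^{n-3}$ with the standard toric boundary added) does coincide with the paper's passage from $n=3$ to $n\ge 4$, but without a valid three-dimensional core the proposal does not stand.
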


Theorem~\ref{theorem:main-1} is proved in
Section~\ref{section:exs-1} below (after some simple variants of
Conjecture~\ref{theorem:james} --- as discussed in
{\ref{subsection:int-6}}). We stress that our construction of
varieties from $\frak{T}^{\frak{f},n}$ was very much motivated by
the exposition in \cite{james}. In fact, the arguments of
\cite{james} rely on essentially two assertions, namely that an
algebraic variety isomorphic to a toric variety in codimension $1$
is also toric (see \cite[\S 3]{james}), and that the divisor
$K_X+D$ has an \emph{integral} $\ra$\,-\,complement (cf.
\cite{pro-comp}, \cite{slava-com}), i.\,e. one can always find an
integral Weil divisor $D'\ge D$ (for $D$ considered up to the
linear equivalence) such that the pair $(X,D')$ is lc and
$K_X+D'\equiv 0$ (see \cite[\S 4]{james}).
Conjecture~\ref{theorem:james} then follows from these two
assertions and a $\dim X$\,-\,inductive argument.

The outlined strategy applies well when $\dim X = 2$ and justifies
Conjecture~\ref{theorem:slava} in this case (cf. \cite{slava-com},
\cite[Proposition 2.1]{pro-2}, \cite[Theorem 8.5.1]{pro-comp}). At
the same time, rational scrolls (a.\,k.\,a. $\p^N$\,-\,bundles
over $\p^1$) are all toric, so that it is reasonable to test
Conjecture~\ref{theorem:james} on $X$ related to a
projectivization $\p(\ve)$ for some indecomposable vector bundle
$\ve$ on $\p^2$. And it is this $X$ for which the method of
\cite{james} together with Conjecture~\ref{theorem:james} break.
More specifically, when $\dim X = 3$ one obtains $X$ as the blowup
of a quadric $Q\subset\p^4$ in a line $\ell\subset Q$, followed by
factorizing by two commuting $\linebreak\cel\slash 2$\,-\,actions.
Both $\cel\slash 2$ act on $Q$ preserving $\ell$ and are
constructed as follows. One $\cel\slash 2$ corresponds to the
Galois action for the $2:1$ projection $Q\map\p^3$ and another
$\cel\slash 2$ is a lift to $Q$ of the Galois involution on $\p^3$
corresponding to the quotient morphism $\p^3\map\p(1,1,1,2)$ (one
can easily see such a lifting does exist). It is then an exercise
to find $D$ as in Theorem~\ref{theorem:main-1} and simple fan
considerations show that $X$ is non\,-\,toric (see
Section~\ref{section:exs-1} for further details). Finally, we
observe in this way that so obtained $X$ is \emph{singular}, while
for smooth $X$ Conjecture~\ref{theorem:slava} has been proved
recently in \cite{yao}.

\begin{remark}
\label{remark:com-on-mista} Let us describe, for consistency, the
above mentioned ``\,toric\,-\,in\,-\,codimension\,-\,$1$\,'' and
``\,$\ra$\,-\,complementary\,'' assertions a bit more thoroughly.
Firstly, in the former case given two varieties $X_1,X_2$ (we
assume both $X_i$ to be $\ra$\,-\,factorial), with $X_1$ toric and
$\theta: X_1\dashrightarrow X_2$ an isomorphism in codimension
$1$, it is easy to see that the indeterminacy locus of $\theta$ is
torus\,-\,invariant (use \cite[Lemma 6.39]{kol-mor} for instance).
Furthermore, as soon as $X_1$ is a Mori dream space (see
\cite{hu-keel}), $\theta$ can be factored into a sequence of
torus\,-\,invariant $\Delta$\,-\,flips with respect to a movable
divisor $\Delta$ on $X_1$, so that $\theta$ and $X_2$ are also
toric. This may work for instance when both $X_i$ admit integral
boundaries $D_i$ and birational contractions $f_i:X_i\map Y$ such
that $c(X_i,D_i)=0$ and $Y$ (hence also $(Y,f_{i*}(D_i))$) is
toric (see the discussion in \cite{james} right
before$\slash$after Definition\,-\,Lemma 3.4).\footnote{~Note
however that there is a substantial gap in \cite{james} at this
point. Namely, let $E_i$ be the $f_i$\,-\,exceptional divisor,
inducing a discrete valuation $v_i$ on the field $\com(Y)$. Then
it is claimed in \cite{james} that (for $(X_i,D_i)$, etc. as
given, but without any assumption on $c(X_i,D_i)$) there is a
sequence of toric blowups of $Y$ extracting $v_i$ (starting with
the blowup of $f_i(E_i)$). But this does not occur in general
(globally at least) because the scheme $f_i(E_i)$ may not be
reduced (this is a popular spot of erroneous usage of \cite[Lemma
2.45]{kol-mor} --- replacing the initial \emph{scheme} $f_i(E_i)$
by $f_i(E_i)_{red}$). For example, consider $Y := \com^2$ with
(toric) boundary $\Delta := (xy = 0)$, and let $f_1: X_1\map Y$ be
the blowup of the scheme $Z:=((x+y^2)^2=y^3=0)$ (supported at
$(0,0)$). In other words, $f_1$ is the weighted blowup with
weights $(3,2)$, so that $K_X+f_{1*}^{-1}\Delta =
f_1^*(K_Y+\Delta) - E_1$ and the pair
$(X_1,D_1:=f_{1*}^{-1}\Delta+E_1)$ is lc. However, the map $f_1$
can not be extended to a toric morphism
$\widetilde{f_1}:\widetilde{X_1}\map\p^2$ for toric surface
$\widetilde{X_1}$ compactifying $X_1$ (resp. $\p^2$ compactifying
$Y$) because $((x+y^2)^2,y^3)$, the defining ideal of $Z$, does
not coincide with $(x^2,y^3)$. One can find non\,-\,toric
$\widetilde{X_1}$ and $\widetilde{f_1}$ though (compare with
$(X,D)$ in the proof of Lemma~\ref{theorem:ac-for-x-0} below).}
But for general $\ra$\,-\,boundaries $D_i$, $f_{i*}(D_i)$ need not
be supported on the toric boundary of $Y$, which brings us to the
``$\ra$\,-\,complementary" part. In the latter case however, one
may expect the \emph{exceptional} complements do not occur for the
$\frak{f}$\,-\,toric pairs, as well as for the toric ones (cf.
\cite{slava-com}, \cite{pr-compl-ex}, \cite{mar-pr-1},
\cite{mar-pr-2}, \cite{ishii-pro}), and so the boundaries
$D_i,f_{i*}(D_i)$ can probably be made toric.
\end{remark}

\refstepcounter{equation}
\subsection{}
\label{subsection:int-6}

We conclude by stating some positive versions of
Conjecture~\ref{theorem:james} we were able to find:

\begin{prop}[Yu.\,G. Prokhorov]
\label{theorem:prox-comp} Let $(X,D)$ be as in
{\ref{subsection:int-2}}, $K_X + D \equiv 0$, $d_i = 1$ for all
$i$. Then

\begin{enumerate}

\item\label{it-A}

$c(X,D)\ge 0$,

\smallskip

\item\label{it-B} if $c(X,D)=0$, then $X$ is rationally connected.
Furthermore, if $X$ has only terminal singularities, then $D$ has
a rationally connected component.

\end{enumerate}

\end{prop}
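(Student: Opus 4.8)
The plan is to prove both assertions by induction on $n=\dim X$, after the usual reduction. By \ref{subsection:int-3} I may assume $Z$ is a point, so that $X$ is projective; since $d_i=1$ for all $i$, the boundary $D=\sum_i D_i$ is reduced, $\sum d_i$ equals the number $m$ of its components, and $(X,D)$ is an lc pair with $K_X+D\equiv 0$, i.e.\ a log Calabi--Yau pair. Having passed to a $\ra$-factorial dlt model (see \ref{subsection:int-2}) I may also assume $(X,D)$ dlt, so that each $D_i$ is normal and $(X,(1-\epsilon)D)$ is klt for small $\epsilon>0$. For the induction to close I will in fact prove the \emph{a priori} stronger statements in which $D$ is allowed to be an arbitrary boundary, since adjunction and the canonical bundle formula leave the reduced class.

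For the first assertion, if $m=0$ then $c(X,D)=r(X,D)+n=n\ge 0$. If $m\ge 1$, I fix a component $D_1$; as a divisorial lc centre of the dlt pair it is normal, and adjunction gives $(K_X+D)|_{D_1}=K_{D_1}+\Theta_1$ with $\Theta_1=\mathrm{Diff}_{D_1}(D-D_1)$. Then $(D_1,\Theta_1)$ is an $(n-1)$-dimensional lc pair with $K_{D_1}+\Theta_1\equiv 0$, whose boundary is carried by the traces $D_j\cap D_1$ together with the different. The key point is the adjunction monotonicity of the complexity,
\[
c(D_1,\Theta_1)\le c(X,D),
\]
which one establishes by weighing the restriction map $N^1(X)_{\ra}\to N^1(D_1)_{\ra}$ on the span of the $D_i$ against the coefficient sum of $\Theta_1$ produced by the different. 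Granting this, $c(X,D)\ge c(D_1,\Theta_1)\ge 0$ by the inductive hypothesis, the base case $n=1$ being the observation that $K_{\p^1}+D\equiv 0$ forces $\deg D=2$, whence $c=1+1-2=0$ (and $c=1$ in the genus-one case $D=0$).

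For the rational connectedness in the second assertion I drop the reducedness of $D$ and prove: any projective lc pair with $K_X+D\equiv 0$ and $c(X,D)=0$ has $X$ rationally connected. Since $c(X,D)=0$ forces $D\ne 0$, the class $-K_X\equiv D$ meets an ample divisor positively, so $K_X$ is not pseudoeffective and $X$ is uniruled. Running the $(K_X+(1-\epsilon)D)$-MMP --- a $D$-positive, hence log Calabi--Yau, MMP preserving lcness and $K+D\equiv 0$ --- terminates in a Mori fibre space $\phi\colon X'\map Y$ with $X\dashrightarrow X'$ birational. The general fibre $F$ is a klt Fano, hence rationally connected, and the canonical bundle formula writes $K_{X'}+D'\equiv\phi^*(K_Y+B_Y+M_Y)\equiv 0$, producing a generalised log Calabi--Yau pair on $Y$. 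One then shows that the complexity splits as $c(X',D')=c(F,D'|_F)+c(Y,B_Y+M_Y)$; since both summands are $\ge 0$ by the first assertion and the total is $0$, each vanishes, so by induction $Y$ is rationally connected. As a rationally connected fibration with rationally connected base is rationally connected (Graber--Harris--Starr), $X'$, hence $X$, is rationally connected.

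Finally, to produce a rationally connected component of $D$ when $X$ is terminal, note that terminality makes $(X,D)$ plt near $D$, so each $D_i$ is normal, and since $X$ is smooth in codimension two the different $\Theta_i$ is again reduced and supported exactly on the traces $D_j\cap D_i$; thus each $(D_i,\Theta_i)$ is once more a reduced log Calabi--Yau pair of dimension $n-1$. Choosing any $D_1$ (which exists as $D\ne 0$) and combining the adjunction monotonicity with the first assertion gives $0\le c(D_1,\Theta_1)\le c(X,D)=0$, so $c(D_1,\Theta_1)=0$, and by the rational-connectedness statement applied in dimension $n-1$ the component $D_1$ is rationally connected. I expect the main obstacle to be precisely the complexity bookkeeping under these two operations: the monotonicity $c(D_1,\Theta_1)\le c(X,D)$, and --- more seriously --- the additivity $c(X',D')=c(F,D'|_F)+c(Y,B_Y+M_Y)$ over a Mori fibre space, which requires controlling the moduli part $M_Y$ and forces the whole induction to be run in the enlarged category of generalised polarised log Calabi--Yau pairs; verifying that the inductive hypothesis survives there is the technical heart of the argument.
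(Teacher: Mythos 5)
Your overall architecture (induction on dimension, adjunction to a boundary component, MMP to a Mori fibre space, Graber--Harris--Starr at the end) is in the same spirit as the paper's, but the two pillars you lean on are precisely the statements that would have to be proved, and the first is used in a generality where it is not justified. The paper does \emph{not} perform adjunction to an arbitrary component $D_1$: it first runs the $K_X$-MMP (legitimate since $K_X\equiv -D$), notes that no component of $D$ is contracted and that $c(X,D)$ does not increase, and only then, on the resulting Mori fibre space $\phi\colon X\to Y$, restricts to a \emph{horizontal} component $D_0$. The reason is exactly the failure mode your ``adjunction monotonicity'' glosses over: a component $D_j$ with $D_j\cap D_1=\emptyset$ contributes $1$ to $\sum d_i$ (lowering $c(X,D)$) while leaving no trace in $\Theta_1$, and it only pays for itself if its class dies \emph{independently} under the restriction $N^1(X)\otimes\ra\to N^1(D_1)\otimes\ra$; likewise a fractional component of the different (arising from codimension-two singularities of $X$ along $D_1$, which dlt pairs do have) raises $r(D_1,\Theta_1)$ by up to $1$ while adding only $1-1/m<1$ to the coefficient sum. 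Lemma~\ref{theorem:prox-comp-l} of the paper is the device that eliminates the first failure mode: for a horizontal $D_0$ over a Mori fibre space, either $D_0$ meets every other component, or it misses exactly one, in which case $\phi$ is a conic bundle with two sections --- and only with this dichotomy does $c(D_0^{\nu},B^{\nu})\le c(X,D)$ become a checkable inequality. Moreover, the paper keeps the induction inside \emph{integral} boundaries (the different is $B^{\nu}=N+\nu^*B$, integral by construction), whereas your decision to prove the ``a priori stronger'' statement for arbitrary $\ra$-boundaries converts the Proposition into (the $K_X+D\equiv 0$ case of) Conjecture~\ref{theorem:james},\,\ref{cl-1} --- the open conjecture from which this Proposition was carved out as a tractable special case; nothing in your sketch supplies that.

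Your part \ref{it-B} is also far heavier than what is needed and rests on a second unproved claim, the additivity $c(X',D')=c(F,D'\vert_F)+c(Y,B_Y+M_Y)$ over a Mori fibre space. The complexity of a generalised pair $(Y,B_Y+M_Y)$ is not even defined in this paper (how is $M_Y$ decomposed into pieces?), and your induction would then have to be re-established in that enlarged category, which you yourself concede is ``the technical heart.'' The paper needs none of this: from $c(X,D)=0$ one gets $c(D_0^{\nu},B^{\nu})=0$, so $D_0$ is rationally connected by induction; since $D_0$ is horizontal it dominates $Y$, so $Y$ is rationally connected; the general fibre is a klt Fano, hence rationally connected by Hacon--McKernan; and Graber--Harris--Starr finishes. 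Finally, in the terminal case your assertion that terminality makes $(X,D)$ plt near $D$ is false --- a smooth $X$ with $D$ simple normal crossing is terminal yet only lc along the double locus of $D$; terminality enters the paper's proof quite differently, namely it makes the dlt modification $\tilde X\to X$ a \emph{small} morphism, so that the rationally connected component produced on the dlt model is the strict transform of an honest component of the original $D$.
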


(Proposition~\ref{theorem:prox-comp} is proved in
Section~\ref{section:aux}. The arguments are entirely due to
Yu.\,G. Prokhorov.)

\begin{theorem}
\label{theorem:main} For $(X,D)$ as in
Proposition~\ref{theorem:prox-comp}, if $ac(X,D)=0$, then $X$ is
rational. Moreover, if $ac(X,D) = 1$ and $X$ is rationally
connected, then $X$ is rational as well.
\end{theorem}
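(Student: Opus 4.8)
The plan is to prove both statements simultaneously by induction on $n=\dim X$, reducing $X$ to a Mori fibre space by a carefully bookkept minimal model programme and then descending to a general fibre. First observe that the hypotheses already force rational connectedness: since $r(X,D)\le\rho(X)$ one has $c(X,D)\le ac(X,D)$, so in the case $ac(X,D)=0$ Proposition~\ref{theorem:prox-comp},\,\ref{it-A} gives $c(X,D)=0$ and then Proposition~\ref{theorem:prox-comp},\,\ref{it-B} yields that $X$ is rationally connected, while in the case $ac(X,D)=1$ this is assumed. In particular $K_X$ is not pseudoeffective, so (after passing to a dlt model) the klt pair $(X,(1-\varepsilon)D)$ with $0<\varepsilon\ll 1$ admits a $(K_X+(1-\varepsilon)D)$-MMP, terminating in a Mori fibre space $f\colon X'\to Y$; throughout I push forward the full boundary $D$.

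The key bookkeeping is that along each step $\phi$ the pushed-forward pair keeps the shape of Proposition~\ref{theorem:prox-comp} — it stays $\mathbb{Q}$-factorial and lc, with $K+D\equiv 0$ and all coefficients equal to $1$ — so Proposition~\ref{theorem:prox-comp},\,\ref{it-A} applies to it and guarantees $ac(\cdot)\ge c(\cdot)\ge 0$ at every stage. A flip leaves $\rho$ and the number of boundary components unchanged, hence preserves $ac$. A divisorial contraction of a component of $D$ drops both $\rho$ and $\sum d_i$ by $1$, again preserving $ac$; a divisorial contraction of a divisor not contained in $\mathrm{Supp}\,D$ drops $\rho$ by $1$ but leaves $\sum d_i$ fixed, lowering $ac$ by $1$. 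Since $ac$ can never become negative, in the case $ac(X,D)=0$ the latter type of contraction is impossible, and in the case $ac(X,D)=1$ it occurs at most once; in either case the resulting Mori fibre space satisfies $ac(X',D')\in\{0,1\}$. As $X$ and $X'$ are birational it suffices to prove $X'$ rational, and whenever $ac(X',D')=0$ we are reduced to the first statement.

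On the Mori fibre space $f\colon X'\to Y$ we have $\rho(X'/Y)=1$ and $-K_{X'}$ relatively ample, so $D'\equiv -K_{X'}$ is relatively ample and thus has a horizontal component. Restricting to a general fibre $F$ gives a log pair $(F,D_F)$ with $K_F+D_F\equiv 0$, with $D_F$ reduced, and with $F$ a rationally connected (Fano) variety of dimension $<n$; combining the complexity count with $\rho(X')=\rho(Y)+1$ is designed to show $ac(F,D_F)\in\{0,1\}$, so the inductive hypothesis makes $F$ rational, the extremal boundary forcing $F$ to be swept out by ``coordinate'' divisors as in \cite[Theorem 6.4]{slava-com}. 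The base case $n=2$ is Castelnuovo's criterion, by which a rationally connected surface is rational. The hard part — and the step I expect to fight — is to assemble rationality of the total space $X'$ from that of the fibre and the base. For this I must (i) control the coefficients of the discriminant $D_Y$ produced by the canonical bundle formula $K_{X'}+D'\equiv f^*(K_Y+D_Y)$, which are in general fractional and so obstruct a naive induction on $Y$, and (ii) exhibit a rational section of $f$ — most naturally a horizontal boundary component restricting to a section of the Fano fibration — trivialising the generic fibre and exhibiting $X'$ as birational to $\mathbb{P}^{\dim F}\times Y$ with $Y$ rational. Extracting such a section from the complexity-$0$ (resp. complexity-$1$) structure on $F$, and checking that $Y$ is itself rational, is where the argument is most delicate.
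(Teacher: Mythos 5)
Your reduction to a Mori fibre space and the $ac$-bookkeeping along the MMP are sound (and agree in spirit with what the paper does), but the proposal has two genuine gaps, located exactly where you say you ``expect to fight'', and they are not technical delicacies --- they are the content of the theorem. First, your claim that a general fibre $F$ of $f\colon X'\to Y$ satisfies $ac(F,D_F)\in\{0,1\}$ does not follow from $\rho(X')=\rho(Y)+1$: the relative Picard number controls the Picard rank of the \emph{generic} fibre over the function field $\com(Y)$, not of a general \emph{geometric} fibre (think of a cubic-surface fibration, whose geometric fibres have $\rho=7$ while the generic fibre has rank $1$ over $\com(t)$). Bounding $\rho$ of the fibre is a real ingredient: in the paper it is Lemma~\ref{theorem:ex-div-con} (quoted from \cite{pro-1}), and it is available only after the fibration has been arranged to be a pencil over $\p^1$ with $\rho(X)=2$. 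Second, and more seriously, geometric rationality of the fibres plus rationality of $Y$ never yields rationality of $X'$ --- conic bundles and quadric bundles over rational bases are standard counterexamples --- so the ``assembly'' step you postpone is precisely the theorem: one needs rationality of the generic fibre \emph{over the non-closed field} $\com(Y)$, and your proposal contains no mechanism for such descent.

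The paper's route is built to supply exactly these two missing pieces. Since $ac(X,D)\le 1$ and all $d_i=1$ force $\mathrm{Supp}\,D$ to have at least $\rho(X)+1$ components, the classes $D_i$ are linearly dependent in $N^1(X)\otimes\ra$, and Corollary~\ref{theorem:con-lemma} (numerical equivalence coincides with $\ra$-linear equivalence on a rationally connected variety) upgrades a numerical relation to $D_1\sim_{\ra}\sum_{i\ge 2}\delta_iD_i$; this produces a pencil $f\colon X\dashrightarrow\p^1$ whose indeterminacy is resolved by crepant blowups (see {\ref{subsection:pro-3}}). Thus the base is $\p^1$ by construction --- nothing to check about its rationality --- and a relative MMP (with divisorial contractions absorbed by induction on $\rho$, as in your bookkeeping, via Lemma~\ref{theorem:alb-is-a-morphism}) reduces to $\rho(X)=2$, where the fibre is $\ra$-factorial with $\rho(F)=1$. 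The descent problem is then solved by Proposition~\ref{theorem:rat-1} together with Remark~\ref{remark:commen-on-prop-rat}: in the $\rho=1$ case, after cyclic covers, $X$ is $\p^n/\frak{A}$ or $Q/\frak{B}$ for finite abelian groups, and projection from a fixed point proves rationality \emph{over an arbitrary field of characteristic zero}; applied to the generic fibre over $\com(\p^1)$ this gives rationality of $X$ itself. Without analogues of the pencil construction, of the $\rho(F)=1$ statement, and of the non-closed-field version of the $\rho=1$ case, your induction cannot close.
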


\begin{remark}
\label{remark:ac-vs-a} In Theorem~\ref{theorem:main}, it might be
possible to replace $ac(X,D)$ (resp. $D$ being integral) by
$c(X,D)$ (resp. $\llcorner D\lrcorner\ne 0$), but we could not
extend the arguments to this setting.
\end{remark}

It is in the proof of Theorem~\ref{theorem:main} where we use the
nice inductive trick suggested in \cite{james}. Namely, since
$\text{Supp}\,D$ consists of $\ge\rho(X)+1$ (irreducible)
divisors, one can find a map (a ``\,Morse function\,") $f:
X\map\p^1$ whose fibers satisfy the hypotheses of
Theorem~\ref{theorem:main}, after what we argue by induction on
$\dim X$ (see Section~\ref{section:pro} for further details).

\bigskip

\thanks{{\bf Acknowledgments.} I am grateful to Yu.\,G. Prokhorov and D. Stepanov for introducing me to the subject treated in this
paper and for helpful conversations. Also the work owes much to
hospitality of the Courant Institute and Max Planck Institute for
Mathematics. Finally, partial financial support was provided by
CRM fellowship, NSERC grant, World Premier International Research
Initiative (WPI), MEXT, Japan, and by Grant-in-Aid for Scientific
Research (26887009) from Japan Mathematical Society (Kakenhi).

\bigskip

\section{Proof of Proposition~\ref{theorem:prox-comp}}
\label{section:aux}

\refstepcounter{equation}
\subsection{}
\label{subsection:pre-2}

Firstly, as $K_X\equiv -D\ne 0$, we can run the $K_X$\,-\,MMP (see
\cite[Corollary 1.3.2]{BCHM}). Furthermore, since $K_X+D\equiv 0$,
the components of $D$ are not contracted on each step (see e.\,g.
\cite[Lemma 3.38]{kol-mor}). Note also that the quantity $c(X,D)$
does not increase on each step. Hence we may assume there is an
extremal contraction $\phi: X \map Y$ which is a Mori fiber space.
Finally, as one can easily see by restricting $D$ to a general
fiber $F$ of $\phi$, there is a horizontal component $D_0 \subset
D$, i.\,e. $\phi(D_0) = Y$. One may take $D_0$ to be irreducible.

\begin{lemma}[{cf. \cite[Corollary 3.7.1]{pro-2}}]
\label{theorem:prox-comp-l} We have either $D_0\cap
D_i\ne\emptyset$ for all $i$ or there exists $j$ such that
$D_0\cap D_j = \emptyset$. In the latter case, $\phi: X
\longrightarrow Y$ is a conic bundle with two (generic) sections
$D_0,D_j$.\footnote{~Both $D_0$ and $D_j$ may contain some fiber
components.}
\end{lemma}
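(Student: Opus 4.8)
The plan is to analyze the restriction of the boundary $D$ to the generic fiber $F$ of the Mori fiber space $\phi: X \map Y$, and then bootstrap that fiber-wise information to a global statement about how the horizontal component $D_0$ meets the remaining components $D_i$. Since $K_X+D\equiv 0$ and all $d_i=1$, the adjunction formula will control things sharply. First I would restrict to $F$: by adjunction $(K_X+D)|_F \equiv 0$ gives $K_F + D|_F \equiv 0$, and since the $D_i$ are reduced ($d_i=1$) and $(X,D)$ is lc, the pair $(F, D|_F)$ is an lc pair with $K_F+D|_F\equiv 0$ on a Fano-type fiber.

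\medskip
\noindent\textbf{The low-dimensional base case and the dichotomy.}
The crucial observation is that for a Mori fiber space the relative Picard number is $1$, so on $F$ the situation is rigid. I expect the generic fiber $F$ to be a Fano variety with $\rho(F)=1$ carrying the horizontal divisor $D_0|_F$ together with whatever other components $D_i$ are themselves horizontal. I would first treat the extremal case where $\dim F = 1$, so $F\simeq\p^1$ and $\phi$ is (generically) a conic bundle: here $K_F+D|_F\equiv 0$ forces $\deg(D|_F)=2$, i.e. exactly two points of $F$ lie on the horizontal part of $D$. This immediately produces the dichotomy: either both points come from $D_0$ meeting itself in the fiber (ruled out if $D_0$ is a section), or $D_0$ and exactly one other horizontal component $D_j$ each contribute one point, giving two sections. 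The content of the lemma is then to show that at most one component $D_j$ can fail to meet $D_0$, and that when this happens $\phi$ is genuinely a conic bundle with $D_0,D_j$ as its two disjoint sections.

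\medskip
\noindent\textbf{Propagating intersection information.}
The main step is to rule out the possibility that two or more components $D_j, D_{j'}$ are each disjoint from $D_0$. For this I would use connectedness of the non-klt (lc) locus: since $(X,D)$ is lc with $K_X+D\equiv 0$ and $-K_X\equiv D$ is nef, the connectedness theorem (Koll\'ar--Shokurov, as in \cite{kol-mor}) applies to $\llcorner D\lrcorner$ along the fibers of $\phi$. Restricting to a fiber, the divisor $D|_F$ must have connected support wherever the fiber sits over the locus where $-(K_X+D)$ degenerates appropriately; thus the horizontal components cannot split into two mutually non-meeting pieces unless the fiber dimension is exactly $1$ and the bundle is a conic bundle with precisely two sections. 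I would combine this with the numerical constraint from adjunction to force the count. The hard part will be making the connectedness/adjunction argument global rather than merely fiber-wise: one must verify that a single component $D_j$ disjoint from $D_0$ globally (not just over the generic point of $Y$) forces the conic bundle structure everywhere, which is where I expect to invoke the $\ra$-factoriality of $X$ together with the fact that $D_0$ and $D_j$, being disjoint sections of a relatively minimal fibration, cannot degenerate. Following the cited \cite[Corollary 3.7.1]{pro-2}, the remaining bookkeeping—showing the two sections are disjoint and that no third horizontal component exists—should reduce to a degree computation on the generic fiber plus a specialization argument.
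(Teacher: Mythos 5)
Your fiber-degree computation ($K_F+D|_F\equiv 0$ on $F=\p^1$ forces $\deg D|_F=2$) is exactly the engine of the paper's proof, but the way you deploy it leaves two genuine gaps. First, your declared ``main step'' rests on the Koll\'ar--Shokurov connectedness theorem as stated in \cite{kol-mor}, and that theorem requires $-(K_X+D)$ to be $\phi$-nef \emph{and} $\phi$-big; here $K_X+D\equiv 0$, so $-(K_X+D)$ is $\phi$-trivial and never $\phi$-big, and the cited statement simply does not apply. (There is a refined version for lc-trivial contractions -- at most two connected components of the non-klt locus near each fiber, two components forcing a standard $\p^1$-link -- but you neither state nor prove it; and even granting it, $D_0\cap D_j=\emptyset$ does not by itself disconnect the non-klt locus, since other components of $D$ can chain $D_0$ to $D_j$, so one would still have to localize near a general fiber where horizontal components are pairwise disjoint.) None of this machinery is needed: if $D_0$ misses two components $D_1,D_2$, then $D_1,D_2$ are necessarily \emph{horizontal} -- because $\rho(X/Y)=1$ forces any vertical effective divisor to satisfy $D_i\cdot F=0$ and hence to be a union of fibers, so every vertical component meets the horizontal $D_0$ -- and then three horizontal components with coefficient $1$ give $\deg D|_F\ge 3>2$, the desired contradiction. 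This ``vertical components automatically meet $D_0$'' observation is missing from your write-up, and without it the degree count does not close.

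Second, the case $\dim X/Y\ge 2$ is essentially not treated. There the lemma asserts that $D_0$ meets \emph{every} $D_i$ with no exception, and your sketch never establishes this; moreover your claim that the generic fiber has $\rho(F)=1$ is false in general (a Mori fiber space only has relative Picard number one -- e.g.\ the fibers of a del Pezzo fibration of degree $6$ have $\rho(F)=4$). The correct argument, which is the paper's: $\rho(X/Y)=1$ and $D_0$ horizontal effective imply $D_0$ is $\phi$-ample, so $D_0|_F$ is ample; any horizontal $D_i$ restricts to a positive-dimensional subvariety of $F$ (as $\dim F\ge 2$), which an ample divisor must meet, and vertical components are unions of fibers as above. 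Finally, your closing worries about ``globalizing'' the conic bundle structure are unnecessary: once at most one $D_j$ misses $D_0$ and $\deg D|_F=2$, one gets $D_0\cdot F=D_j\cdot F=1$ immediately, i.e.\ both are sections, which is all the lemma claims.
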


\begin{proof}
Suppose first that $\dim X/Y > 1$. Then, since $D_0\big\vert_F$ is
ample, one gets $D_0\cap D_i\ne\emptyset$ for all $i$. Further, if
$\dim X/Y = 1$ and $D_0$ does not intersect at least two of
$D_i$, say $D_0 \cap D_1 = D_0 \cap D_2 = \emptyset$, then $D_0,
D_1, D_2$ are multisections of $\phi$ and we obtain
$$
0 \equiv \big(K_X + D\big)\big\vert_F = K_F + D\big\vert_F > 0
$$
on $F = \p^1$, a contradiction. Thus there is $D_j$ with $D_j\cap
D_0 = \emptyset$. Then, since $K_F = \mathcal{O}_{\p^1}(-2)$, we
have $D_0\cdot F = D_j\cdot F = 1$. Hence both $D_0,D_j$ are
sections of $\phi$.
\end{proof}

Consider the divisor
$$
B := \sum_{i\neq 0,\ D_i\cap D_0\neq \emptyset} D_i
$$
and the normalization $\nu: D_0^{\nu} \longrightarrow D_0$. Let
$B^{\nu} := \text{Diff}_{\scriptscriptstyle D_0}(B)$ be the
different of $B$ on $D_0^{\nu}$ (see e.\,g. \cite[\S
3]{sho-perestrojki}). Note that
$$
K_{D_0^{\nu}} + B^{\nu} = \nu^*\big(\big(K_X +
D\big)\big\vert_{D_0}\big)\equiv 0
$$
in our case.

Further, cutting with hyperplane sections we obtain that $B^\nu =
N+\nu^*B$, where $N$ is the preimage of the non\,-\,normal locus
on $D_0$ (cf. \cite[\S 3]{sho-perestrojki}). Also, by Inversion of
Adjunction (see \cite{kawakita}), the pair $(D_0^{\nu},B^{\nu})$
is lc (because $(X,D)$ is this). Hence we may replace the pair
$(X,D)$ with $(D_0^{\nu},B^{\nu})$ (cf. {\ref{subsection:int-2}}).
Moreover, we have
$$
B^{\nu} = N+\nu^*B = N+\sum_{i\neq 0} \nu^*D_i,
$$
which easily yields $c(D_0^{\nu},B^{\nu})\le c(X,D)$ (see
Lemma~\ref{theorem:prox-comp-l}).
Proposition~\ref{theorem:prox-comp},\,\ref{it-A} follows by
induction on the dimension (note that $B^{\nu}$ is integral by the
construction).

Suppose now that $c(X,D)=0$. Then $c(D_0^{\nu},B^{\nu})=0$ and
$D_0$ is rationally connected by induction. Furthermore, since the
fiber $F$ is a Fano variety with only log terminal singularities,
it is rationally connected (see \cite[Corollary
1.3]{chris-james}). This implies that $X$ is rationally connected
as well (see \cite[Corollary 1.3]{gr-har-st}). Finally, if $X$ has
only terminal singularities, then its dlt modification $\tilde{X}
\longrightarrow X$ is a small birational morphism, and
Proposition~\ref{theorem:prox-comp},\,\ref{it-B} follows.

\begin{cor}
\label{theorem:con-lemma} For $X$ as in
Proposition~\ref{theorem:prox-comp} (in fact for any rationally
connected $X$) and any $\ra$\,-\,Cartier divisors
$L_1,L_2\in\mathrm{Pic}(X)\otimes\ra$, we have $L_1\equiv L_2 \iff
L_1\sim_{\ra}L_2$.
\end{cor}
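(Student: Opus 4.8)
The direction $L_1\sim_{\ra}L_2\Rightarrow L_1\equiv L_2$ is automatic, since a principal (or $\ra$-principal) divisor meets every curve in degree $0$; the whole content is the converse. Setting $L:=L_1-L_2\in\pic(X)\otimes\ra$, the claim reduces to showing that a $\ra$-Cartier divisor with $L\equiv 0$ satisfies $L\sim_{\ra}0$.

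The plan is to reduce to a smooth model, where the classical Hodge-theoretic input is available. First I would take a resolution of singularities $\pi:\tilde{X}\map X$. Since $X$ is rationally connected and $\pi$ is birational, $\tilde{X}$ is again rationally connected (rational connectedness being a birational invariant among smooth projective varieties), whence $H^1(\tilde{X},\mathcal{O}_{\tilde{X}})=0$. Pulling back, $\pi^*L$ is an element of $\pic(\tilde{X})\otimes\ra$ with $\pi^*L\equiv 0$, as numerical triviality is preserved under pullback.

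On the smooth rationally connected variety $\tilde{X}$ I would then invoke that the identity component $\pic^0(\tilde{X})$ is an abelian variety with tangent space $H^1(\tilde{X},\mathcal{O}_{\tilde{X}})=0$, hence $\pic^0(\tilde{X})=0$; thus algebraic and linear equivalence coincide there. The quotient of the group of numerically trivial line bundles by $\pic^0(\tilde{X})$ is finite (it is the torsion subgroup of the N\'eron--Severi group), so the numerically trivial line bundles on $\tilde{X}$ form a finite, hence torsion, group. Consequently $\pi^*L\equiv 0$ forces $\pi^*L\sim_{\ra}0$ on $\tilde{X}$; that is, $m\,\pi^*L=\mathrm{div}(f)$ for some $m>0$ and some $f\in\com(\tilde{X})=\com(X)$.

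Finally I would descend this back to $X$ by applying $\pi_*$. Since $\pi$ is birational one has $\pi_*\mathrm{div}_{\tilde{X}}(f)=\mathrm{div}_X(f)$, while the projection formula gives $\pi_*\pi^*L=L$ because $L$ is $\ra$-Cartier; hence $mL=\mathrm{div}_X(f)$ and $L\sim_{\ra}0$, as desired. The one point demanding care is precisely this last descent: one must check that the $\ra$-linear triviality obtained upstairs is compatible with pushforward, i.e. that $\pi_*\pi^*$ acts as the identity on $\ra$-Cartier classes and that the rational function realizing triviality on $\tilde{X}$ may be taken on $X$ via the common function field. Both are standard for a birational morphism, and this is where the bulk of the (routine) verification lies; the substantive input remains the vanishing $H^1(\tilde{X},\mathcal{O}_{\tilde{X}})=0$ for rationally connected $\tilde{X}$.
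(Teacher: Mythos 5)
Your proof is correct and takes essentially the same route as the paper's: pass to a resolution, use $h^1(\mathcal{O})=0$ for a smooth rationally connected variety (so that $\pic^0$ vanishes and numerically trivial classes die after tensoring with $\ra$), and descend the resulting $\ra$-linear triviality to $X$ through the common function field. The only difference is bookkeeping: you treat the N\'eron--Severi torsion and the pushforward identity $\pi_*\pi^*L=L$ explicitly, where the paper simply remarks that the rational function witnessing the linear equivalence does not depend on the resolution.
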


\begin{proof}
One may replace $X$ by its resolution $X'$. Let $g: X' \map X$ be
a birational contraction. Let us also replace each $L_i$ by
$g^*(L_i)$. The assertion now follows from $h^1(\mathcal{O}_{X'})
= 0$ ($= \dim\text{Alb}_{X'}+1$) provided by
Proposition~\ref{theorem:prox-comp},\,\ref{it-B}. Indeed, for
$L_1,L_2$ both Cartier and effective the condition $L_1\sim L_2$
(equivalent to $L_1\equiv L_2$ due to $\text{Alb}_{X'}=0$) simply
asserts that there is a rational map (a.\,k.\,a. a function in
$\com(X)$) $f:X\dashrightarrow\p^1$ with
$f^{-1}(0)=L_1,f^{-1}(\infty) = L_2$. The argument for arbitrary
$\ra$\,-\,Cartier $L_i$ is similar.
\end{proof}

\bigskip

\section{Proof of Theorem~\ref{theorem:main}}
\label{section:pro}

\refstepcounter{equation}
\subsection{}
\label{subsection:pro-1}

We will assume for what follows that $ac(X,D) = c(X,D)\le 1$. The
proof of Theorem~\ref{theorem:main} will go essentially by
induction on both $\dim X$ and $\rho(X)$. In the former case,
$\dim X = 2$ is the base of induction (cf. the beginning of
{\ref{subsection:int-3}}), while in the latter case we have the
following:

\begin{prop}
\label{theorem:rat-1} The assertion of Theorem~\ref{theorem:main}
holds when $\rho(X) = 1$.
\end{prop}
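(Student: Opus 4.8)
The plan is to prove that a variety $X$ with $\rho(X)=1$ satisfying the hypotheses of Theorem~\ref{theorem:main} is rational. Since $\rho(X)=1$, there is essentially only one numerical class of divisors, so the boundary $D=\sum D_i$ (with all $d_i=1$ and $K_X+D\equiv 0$) consists of components that are all proportional in $N^1(X)\otimes\ra$, giving $r(X,D)=\rho(X)=1$. The condition $ac(X,D)=c(X,D)\le 1$ then reads $\rho(X)+\dim X-\sum d_i\le 1$, i.e. $\sum d_i\ge n$, so that $D$ has at least $n$ components (recall $d_i=1$). In the boundary case $ac(X,D)=0$ we get exactly $\sum d_i=n+1$, so $\mathrm{Supp}\,D$ consists of $n+1=\rho(X)+\dim X$ prime divisors, whereas $ac(X,D)=1$ gives $n$ components.

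First I would invoke Proposition~\ref{theorem:prox-comp} to conclude that $X$ is rationally connected (this is automatic in the $ac=0$ case, and is assumed in the $ac=1$ case). Next, since $\rho(X)=1$ and $-(K_X+D)\equiv 0$ with $-K_X\equiv D\ge 0$ effective and nonzero, the variety $X$ is a $\ra$-factorial log Fano of Picard number one; in particular $-K_X$ is ample. I would run the argument inductively by slicing: because $\mathrm{Supp}\,D$ has many components, I would pick a component $D_0$ and pass to its normalization $D_0^\nu$ with the different $B^\nu=\mathrm{Diff}_{D_0}(B)$, exactly as in Section~\ref{section:aux}. By adjunction $K_{D_0^\nu}+B^\nu\equiv 0$, the pair $(D_0^\nu,B^\nu)$ is lc with integral boundary, and the complexity does not increase, so $c(D_0^\nu,B^\nu)\le c(X,D)\le 1$. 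This reduces the dimension, and $\dim X=2$ serves as the base of the induction (Conjecture~\ref{theorem:slava} in dimension two, cf. the beginning of \ref{subsection:int-3}).

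The rationality itself I would deduce by combining rational connectedness with the strong structural constraint coming from $\rho(X)=1$ together with the abundance of boundary divisors. Concretely, in the $ac(X,D)=0$ case the $n+1$ components of $D$ give $n+1$ effective divisors all proportional to $-K_X$; using Corollary~\ref{theorem:con-lemma} (numerical equivalence implies $\ra$-linear equivalence on a rationally connected $X$), these components furnish a pencil or a web of divisors that one can use to build an explicit birational map to projective space, or to reduce to the slice $(D_0^\nu,B^\nu)$ which is rational by induction and whose rationality lifts back to $X$ via the Mori fiber space $\phi:X\map Y$ (here $Y$ is a point since $\rho(X)=1$, so $X$ itself is a Fano of Picard number one with a large toric-like boundary). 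I would then bootstrap from the fact that a rationally connected surface is rational, lifting rationality along the fibration whose generic fiber $F=\p^1$ carries two sections $D_0,D_j$ from Lemma~\ref{theorem:prox-comp-l}.

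The hard part will be controlling the lift of rationality from the slice $D_0^\nu$ back to $X$ when $\rho(X)=1$, because there is no genuine fibration $\phi$ onto a positive-dimensional base to exploit (the Mori contraction collapses $X$ to a point). Thus the inductive slicing reduces $\dim X$ but loses the fiber-space structure that makes rationality propagate cleanly; I expect to need the precise count $\sum d_i\ge n$ to guarantee that the different $B^\nu$ still has enough components so that $(D_0^\nu,B^\nu)$ again falls under the inductive hypothesis, and to use Corollary~\ref{theorem:con-lemma} to trivialize the Albanese obstruction at each stage. Managing the interaction between the $\rho$-induction and the $\dim$-induction, so that neither the Picard number nor the number of boundary components degenerates below the threshold needed, is where the real work lies.
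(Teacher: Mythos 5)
Your proposal does not actually prove the statement, and you have located the gap yourself: when $\rho(X)=1$ the extremal contraction maps $X$ to a point, so there is no fibration along which rationality of the slice $(D_0^{\nu},B^{\nu})$ could propagate back to $X$. The adjunction/slicing machinery of Section~\ref{section:aux} only yields rational connectedness --- which is already Proposition~\ref{theorem:prox-comp} and is an \emph{input} here, not the goal --- since rationality is not inherited by an ambient variety from a boundary divisor. Likewise, the suggestion that the $n+1$ components of $D$, all proportional to $-K_X$, ``furnish a pencil or a web of divisors'' giving a birational map to projective space is precisely the content that would have to be proved: Corollary~\ref{theorem:con-lemma} turns numerical proportionality into $\ra$-linear equivalence, but an abundance of effective divisors in multiples of $-K_X$ does not by itself produce a birational parametrization (any Fano variety of Picard number one has plenty of such divisors). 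So what remains of your plan is: $X$ is rationally connected and carries a large boundary, plus an unproven claim where the rationality should come from.

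The paper's proof supplies exactly the missing idea, and it is of a completely different nature: one takes cyclic coverings of $X$ branched along the various $D_i$'s and runs the arguments of \cite[Corollary 2.8]{pro-1} to conclude that $X\simeq \p^n\slash\frak{A}$ or $Q\slash\frak{B}$, where $Q\subset\p^{n+1}$ is a smooth quadric and $\frak{A}\subset\aut(\p^n)$, $\frak{B}\subset\aut(Q)$ are finite \emph{abelian} groups. The case $X=\p^n\slash\frak{A}$ is handled in \cite{pro-1} (the pair is even toric). In the quadric case the paper proves (Lemma~\ref{theorem:rat-1-l}) that $\frak{B}$ has a fixed point $P\in Q$, using that the $\frak{B}$-action is induced from $\p^{n+1}$ and that an invariant line meets $Q$ in an invariant set of at most two points; projecting $Q\dashrightarrow\p^n$ from $P$ is then $\frak{B}$-equivariant and birational, so $Q\slash\frak{B}$ is birational to $\p^n\slash\frak{B}$, which is rational. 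This classification-by-covering step, converting the maximal-boundary hypothesis into an explicit description of $X$ as a finite abelian quotient of $\p^n$ or $Q$, is what turns rational connectedness into rationality; no amount of tuning the interaction between the $\rho$-induction and the $\dim$-induction in your scheme will substitute for it.
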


\begin{proof}
After taking the cyclic coverings of $X$ w.\,r.\,t. various $D_i$
(see e.\,g. \cite[\S 2]{sho-perestrojki}), the arguments in the
proof of \cite[Corollary 2.8]{pro-1} apply and show that
$$
X \simeq \p^n\slash\frak{A}\qquad\text{or}\qquad Q\slash\frak{B},
$$
where $Q\subset\p^{n+1}$ is a smooth quadric,
$\frak{A}\subset\aut\,\p^n$ (resp. $\frak{B}\subset\aut\,Q$) is a
finite abelian group with linearized action on $\p^n$ (resp.
$\p^{n+1}$), and the hyperplane sections $D_i$ correspond to the
only eigen vectors in $H^0(\p^n,\mathcal{O}_{\p^n}(1))$ (resp. in
$H^0(Q,\mathcal{O}_Q(1))$) which $\frak{A}$ (resp. $\frak{B}$)
scales non\,-\,trivially on. Note that the case $X =
\p^n\slash\frak{A}$ has been treated already in \cite[Corollary
2.8]{pro-1} and in fact the pair $(X,D)$ turns out to be toric.

Suppose now that $X = Q\slash\frak{B}$. Let $Q^{\frak{B}}$ be the
locus of $\frak{B}$-fixed points on $Q$.

\begin{lemma}
\label{theorem:rat-1-l} $Q^{\frak{B}}\ne\emptyset$.
\end{lemma}

\begin{proof}
Firstly, one easily finds a $\frak{B}$\,-\,invariant line
$l\subset\p^{n+1}$, so that the locus $l\cap Q$ is also
$\frak{B}$\,-\,invariant. Now, if $l \cap Q$ is a finite set, then
$|l \cap Q|\le 2$. Put $l \cap Q = \{P_1,P_2\}$ for some $P_i\in
Q$. Then it is evident that $P_i\in Q^{\frak{B}}$ (after possibly
replacing $l$ by another $\frak{B}$\,-\,invariant line). Finally,
the case when $|l \cap Q| = \infty$, i.\,e. $l\subset Q$, is
obvious.
\end{proof}

Pick any $P \in Q^{\frak{B}}$ and consider the
($\frak{B}$\,-\,equivariant birational) linear projection $Q
\dashrightarrow \p^n$ from $P$. Then the $\frak{B}$\,-\,action
descends to that on $\p^n$ and both $Q/\frak{B},\p^n/\frak{B}$ are
birationally isomorphic, with rational $\p^n/\frak{B}$.
Proposition~\ref{theorem:rat-1} is proved.
\end{proof}

\begin{remark}
\label{remark:commen-on-prop-rat} In general, when $X$ and each of
$D_i$ are defined over a field $\fie$ ($\cha\,\fie = 0$), one can
easily adjust the arguments from the proof of
Proposition~\ref{theorem:rat-1} to show that $X$ is
$\fie$\,-\,rational, provided
$\rho\big(X\otimes_{\fie}\bar{\fie}\big) = 1$ over the algebraic
closure $\bar{\fie}$.
\end{remark}

\refstepcounter{equation}
\subsection{}
\label{subsection:pro-2}

Let $\phi: X \map Y$ be an extremal contraction. Put $D_Y :=
\phi_*(D),D_{Y,i} := \phi_*(D_i)$, and suppose that $\phi$ is
\emph{divisorial}. Note that $\phi_*K_X = K_Y$ because the
singularities of $Y$ are all rational. Hence $K_Y + D_Y =
\phi_*(K_X + D)\equiv 0$ and $c(Y,D_Y)$ is defined.

\begin{lemma}
\label{theorem:alb-is-a-morphism} Given $Y,\phi$ as above, the
assertion of Theorem~\ref{theorem:main} holds for $X$.
\end{lemma}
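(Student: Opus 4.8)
The plan is to descend along the divisorial contraction $\phi\colon X\to Y$ and induct on the Picard number, with base case $\rho(X)=1$ supplied by Proposition~\ref{theorem:rat-1}. Since $\phi$ contracts a single prime divisor $E$, the variety $Y$ is again $\ra$-factorial with $\rho(Y)=\rho(X)-1$; and because $K_X+D\equiv 0$ is numerically $\phi$-trivial while $E$ is $\phi$-negative, the discrepancy along $E$ must vanish, giving the crepant identity $K_X+D=\phi^*(K_Y+D_Y)$. Hence $(Y,D_Y)$ is log canonical with $K_Y+D_Y\equiv 0$ and all coefficients equal to $1$, so it satisfies the hypotheses of Proposition~\ref{theorem:prox-comp} and Theorem~\ref{theorem:main}. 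Counting components (here $\phi_*$ deletes $E$ and preserves each remaining prime divisor with coefficient $1$) yields
$$
ac(Y,D_Y)=\begin{cases} ac(X,D), & E\subset\text{Supp}\,D,\\ ac(X,D)-1, & E\not\subset\text{Supp}\,D,\end{cases}
$$
so in every case $ac(Y,D_Y)\le ac(X,D)\le 1$.

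First I would invoke the structural input that the name of the lemma records: for our pairs, where $-(K_X+D)$ is nef and $K_X+D\equiv 0$, the Albanese map $\al_X\colon X\to\al_X$ is a genuine \emph{morphism} (not merely a rational map), and since $\al$ is a birational invariant of smooth models while $\phi$ is birational one has $\al_X=\al_Y$ with $\al_X=\al_Y\circ\phi$. In the setting of Theorem~\ref{theorem:main} the variety $X$ is rationally connected --- for $ac(X,D)=0$ this follows from $c(X,D)=0$ and Proposition~\ref{theorem:prox-comp},\,\ref{it-B}, and for $ac(X,D)=1$ it is assumed --- so $h^1(\mathcal{O}_X)=0$ and the target $\al_X$ is trivial. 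The morphism property is exactly what licenses Corollary~\ref{theorem:con-lemma}, allowing me to replace numerical by $\ra$-linear equivalence when comparing the divisor classes pushed forward along $\phi$. Rational connectedness itself passes from $X$ to $Y$ because $\phi$ is a dominant (indeed birational) morphism.

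The induction now closes. If $ac(X,D)=0$, then $ac(Y,D_Y)\le 0$; but Proposition~\ref{theorem:prox-comp},\,\ref{it-A} forces $c(Y,D_Y)\ge 0$ and hence $ac(Y,D_Y)=0$, so by the inductive hypothesis (applied to $Y$, of smaller Picard number) $Y$ is rational, and therefore so is $X$ by the birational invariance of rationality. If $ac(X,D)=1$ and $X$ is rationally connected, then $Y$ is rationally connected with $ac(Y,D_Y)\in\{0,1\}$, and applying Theorem~\ref{theorem:main} inductively to $(Y,D_Y)$ --- its first assertion when $ac(Y,D_Y)=0$, its second (rationally connected) assertion when $ac(Y,D_Y)=1$ --- yields $Y$ rational, whence $X$ is rational.

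I expect the main obstacle to lie in the Albanese input: one must genuinely verify that for the possibly singular, merely log canonical pair $(X,D)$ with $-(K_X+D)$ nef the Albanese map is a morphism and is compatible with $\phi$, so that the triviality of $\al_X$ extracted from rational connectedness legitimately feeds Corollary~\ref{theorem:con-lemma}. A secondary point requiring care is the bookkeeping behind the displayed formula for $ac(Y,D_Y)$ --- in particular checking, via the crepant identity $K_X+D=\phi^*(K_Y+D_Y)$, that $\phi_*$ deletes exactly the component $E$ when it lies in $\text{Supp}\,D$ and leaves the coefficient $1$ on every surviving boundary divisor.
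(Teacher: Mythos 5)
Your proof is correct and takes essentially the same route as the paper: push $D$ forward along the divisorial contraction $\phi$, observe that the complexity of $(Y,D_Y)$ does not increase and stays nonnegative (Proposition~\ref{theorem:prox-comp},\,\ref{it-A}), and induct on $\rho(X)$ with Proposition~\ref{theorem:rat-1} as the base case, the paper's two-line proof being exactly this. The only correction: your Albanese paragraph is a red herring and your ``main obstacle'' does not exist --- the lemma's label is vestigial, Corollary~\ref{theorem:con-lemma} plays no role here (it needs only $h^1(\mathcal{O})=0$ on a resolution of a rationally connected variety, not any Albanese-morphism property, and it is used only later in the pencil construction of {\ref{subsection:pro-3}}), and your inductive argument already closes without it.
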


\begin{proof}
It is clear that $0\le c(Y,D_Y)\le c(X,D)$. Then the claim follows
from Proposition~\ref{theorem:rat-1} and induction on $\rho(X)$.
\end{proof}

We now turn to the construction of a pencil on $X$ as was
indicated in {\ref{subsection:int-6}}. Firstly, we may assume that
$D_1\sim_{\ra}\sum \delta_i D_i=:D'_1$ for some $\delta_i\in\ra$,
where $\delta_2\ne 0$, say (see
Corollary~\ref{theorem:con-lemma}). Secondly, we may assume
without loss of generality that all $\delta_i\ge 0$ (replacing, if
necessary, $D_1$ by $D_1+\sum_j\delta'_jD_j$ for some
$\delta_j\in\ra_{\ge 0}$), which gives a rational map $f:
X\dashrightarrow \p^1$ with $kD_1 = f^{-1}(0)$ and $kD'_1 =
f^{-1}(\infty)$ for some integer $k\ge 1$. Finally, since the
indeterminacies of $f$ are located on some of $D_i$, we will
assume (for the sake of simplicity) that $f$ is undefined
precisely on $D_1\cap D_2$ (in general, one just has to consider a
larger number of $D_j$, satisfying $D_1\cap\bigcup D_j =$ the
indeterminacy locus of $f$).

Further, there is a local analytic isomorphism
\begin{equation}
\nonumber \big(X,D_1+D_2,D_1\cap D_2\big) \simeq
\big(\com^n,\{x_1x_2 =
0\},\{x_1=x_2=0\}\big)\slash\cel_m(1,q,0,\ldots,0)
\end{equation}
at the general point of $D_1\cap D_2$, where $m,q\in\na,(m,q)=1$
(see e.\,g. \cite[Proposition 3.9]{sho-perestrojki}). In
particular, if $\phi:\tilde{X}\map X$ is the blowup of $D_1\cap
D_2$, then we have $K_{\tilde{X}} + \phi_*^{-1}(D) + E \equiv
\phi^*(K_X+D)$ for the $\phi$\,-\,exceptional divisor $E$. Thus
after taking a number of subsequent blowups of $D_1\cap D_i$ we
may assume that $f: X\map \p^1$ is \emph{regular}.

\refstepcounter{equation}
\subsection{}
\label{subsection:pro-4}

Let $F$ be a fiber of $f$. Note that $K_X\cdot Z = -D\cdot Z \le
0$ for generic $F$ and a curve $Z\subset F$.

\begin{lemma}
\label{theorem:ex-di} $D\cdot Z>0$ for some $Z$ and $F$ as above.
\end{lemma}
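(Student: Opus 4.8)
The statement to prove is Lemma~\ref{theorem:ex-di}: that $D\cdot Z>0$ for some curve $Z$ inside some fiber $F$ of the regular pencil $f: X\map\p^1$ constructed in {\ref{subsection:pro-3}}. The plan is to argue by contradiction. Suppose that $D\cdot Z\le 0$ for every curve $Z\subset F$ and every fiber $F$. Combined with the observation in {\ref{subsection:pro-4}} that $K_X\cdot Z=-D\cdot Z\le 0$ on generic fibers, this would force $D\cdot Z=0$ identically on the generic fiber, i.e. $D\big\vert_F\equiv 0$ numerically. I would first exploit this to conclude that $K_X\big\vert_F\equiv 0$ as well, so that the generic fiber $F$ is a variety with numerically trivial canonical class and a trivial restriction of the boundary.

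\medskip

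The main idea is that $D\big\vert_F\equiv 0$ is incompatible with the way $D$ meets the fibers. Recall from {\ref{subsection:pro-3}} that $kD_1=f^{-1}(0)$ and $kD_1'=f^{-1}(\infty)$ where $D_1\sim_\ra D_1'=\sum\delta_i D_i$ with $\delta_i\ge 0$ and $\delta_2\ne 0$. First I would note that $D_1$ and (components of) $D_1'$ are themselves among the $D_i$, hence are \emph{vertical} for $f$ (contained in fibers). The remaining components of $D$ must then account for a horizontal part, since $\sum d_i\ge\rho(X)+1$ forces $\text{Supp}\,D$ to have enough components that not all can be fibers. A horizontal component $D_0$ of $D$ restricts to the generic fiber as a nonzero effective divisor $D_0\big\vert_F$, and I would show that under the nef hypothesis this restriction is in fact ample or at least nonzero in $N^1(F)$: this is precisely the contradiction with $D\big\vert_F\equiv 0$, since $D\big\vert_F\ge D_0\big\vert_F>0$.

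\medskip

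More carefully, the cleanest route is the following. Because $-(K_X+D)$ is nef and $K_X+D\equiv 0$ on $X$ by hypothesis, restriction to a generic fiber gives $(K_X+D)\big\vert_F\equiv 0$, whence $K_F+D\big\vert_F\equiv 0$ by adjunction (the fiber being generic, $F$ avoids the singular and base locus after the blowups of {\ref{subsection:pro-3}}). Now if $D\big\vert_F\equiv 0$ then $K_F\equiv 0$. But $f$ is built from the linear system spanned by $D_1$ and $D_1'$, both supported on boundary components; a horizontal component $D_0$ exists and meets $F$ in a nonzero effective cycle. Intersecting $D_0\big\vert_F$ with any curve $Z\subset F$ that it genuinely meets yields $D\cdot Z\ge D_0\cdot Z>0$, the desired inequality. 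So the only way to avoid the conclusion of the lemma is to have \emph{no} horizontal component and $D$ purely vertical — but then $\sum d_i D_i$ lies in the span of fiber classes, contradicting that $r(X,D)$ together with the bound $\sum d_i>\rho(X)$ (from $c(X,D)\le 1$, with the strict inequality in the relevant range) forces a horizontal direction.

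\medskip

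The step I expect to be the main obstacle is controlling the \emph{generic} fiber precisely enough: I must ensure that after the blowups making $f$ regular, the generic $F$ is normal, irreducible, disjoint from the centers of the modification and from $\text{Sing}\,X$, so that adjunction $K_F+D\big\vert_F\equiv 0$ holds cleanly and $D\big\vert_F$ is genuinely effective and nonzero. The subtlety is that the boundary components $D_1,D_2,\ldots$ split into vertical and horizontal parts in a way that depends on the chosen $D_1'$ and the $\delta_i$, and I need the numerical bookkeeping $\sum d_i>\rho(X)$ to guarantee that a horizontal $D_0$ survives with $D_0\big\vert_F\not\equiv 0$. Pinning down that at least one component meets the generic fiber positively, rather than merely being numerically nonzero, is where the real care is required.
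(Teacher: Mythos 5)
Your overall strategy coincides with the paper's: argue by contradiction, reduce to the claim that every component $D_i$ would have to be vertical for $f$ (i.e.\ $f(D_i)=\text{pt}$), and derive a contradiction from that; and your positive half --- a horizontal component $D_0$ meets the generic fiber $F$ in a nonzero effective divisor, so a general curve $Z\subset F$ through a point of $D_0\cap F$, not contained in any $D_i$, gives $D\cdot Z\ge D_0\cdot Z>0$ --- is exactly the step the paper leaves implicit. The gap is in how you rule out the all-vertical case. You claim that a horizontal component must exist because ``$\sum d_i\ge\rho(X)+1$ forces $\text{Supp}\,D$ to have enough components that not all can be fibers.'' That counting inference is false: a fibration $f:X\map\p^1$ can have arbitrarily many vertical prime divisors (reducible fibers, e.g.\ precisely the ones created by the blowups of {\ref{subsection:pro-3}}), so no bound on the number of components can, by itself, force one of them to dominate $\p^1$. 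The bound $\sum d_i\ge\rho(X)+1$ is indeed available from $ac(X,D)\le 1$, but it is used in {\ref{subsection:pro-3}} only to produce the linear relation defining $f$; it does not yield horizontality.

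What does force horizontality --- and what the paper actually invokes --- is the standing assumption $ac(X,D)=c(X,D)\le 1$ of {\ref{subsection:pro-1}}, i.e.\ $r(X,D)=\rho(X)$: the classes of the $D_i$ generate $N^1(X)\otimes\ra=\pic(X)\otimes\ra$ (cf.\ Corollary~\ref{theorem:con-lemma}). If every $D_i$ were vertical, each would be disjoint from the generic fiber $F$, so every class in $\pic(X)\otimes\ra$ would have intersection number zero with every curve $Z\subset F$; but an ample class has positive degree on such $Z$, a contradiction. You do write ``$r(X,D)$'' in passing, but you never state that it equals $\rho(X)$, nor where this comes from (it is an assumption of the section, not a consequence of $c(X,D)\le 1$), and as written your argument leans on the invalid count. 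Replacing the count by this generation statement repairs the proof and makes it coincide with the paper's three-line argument; the rest of your proposal (adjunction on $F$, $K_F\equiv 0$, normality of the generic fiber) is not needed for the lemma.
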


\begin{proof}
Assume the contrary. Then $D_i\cdot Z \le 0$ for all $i$ and
$Z\subset F$. Hence $f(D_i) = \text{pt}$ for all $i$. But the
latter is impossible because $D_i$ generate $\pic(X)\otimes\ra$.
\end{proof}

Among those $Z\subset F$ with $K_X\cdot Z<0$ (see
Lemma~\ref{theorem:ex-di}) there is an extremal ray $R$ of the
cone $\overline{NE}(X)$. The curve representing $R$ sits in some
irreducible component $F'\subseteq F$, so that once $F$ is
reducible, $F\setminus F'\ne \emptyset$, we get an extremal
birational contraction $\phi: X\map Y$ over $\p^1$. We eliminate
the case when $\phi$ is flipping by applying the $K_X$\,-\,flip
and termination of MMP with scaling. On the other hand, if $\phi$
is divisorial (in which case it contracts $F'$), then we are done
by Lemma~\ref{theorem:alb-is-a-morphism}.

Thus we may assume that $\rho(X) = 2$ (i.\,e. $f: X\map \p^1$ is a
Mori fibration).

\begin{lemma}
\label{theorem:ex-div-con} $F$ is $\ra$\,-\,factorial,
$\rho(F)=1$, $D'_1=D_2$ and $D_1,D_2$ are the only components of
$D$ contracted by $f$.
\end{lemma}

\begin{proof}
This is proved in \cite[Proposition 3.6, Lemma 2.10]{pro-1}.
\end{proof}

Now exactly as in \cite[Lemma 2.10,\,(iii)]{pro-1},
Lemma~\ref{theorem:ex-div-con},
Remark~\ref{remark:commen-on-prop-rat} and induction on $\dim$
imply rationality of $X$. Theorem~\ref{theorem:main} is completely
proved.

\bigskip

\section{Proof of Theorem~\ref{theorem:main-1}}
\label{section:exs-1}

\refstepcounter{equation}
\subsection{}
\label{subsection:pr-11-1}

Let us introduce the following two automorphisms of $\p^4$:
$$
g_1: [x:y:z:t:w] \mapsto [y:x:z:t:w]
$$
and
$$
g_2: [x:y:z:t:w] \mapsto [-x:-y:-z:t+w:-w].
$$

Consider the group $G$ generated by $g_1,g_2$. We have $G =
\cel\slash 2 \times \cel\slash 2$ (although the $G$\,-\,action is
\emph{not linear}). Let also
$$
\ell := (x - y = t = w = 0) \subset \mathbb{P}^4
$$
be the $G$\,-\,invariant line. Note that $\ell \subset Q$ for the
quadric $Q \subset \mathbb{P}^4$ given by the equation
\begin{equation}
\label{eq-Q-iii} (x + y - w)w + (x - y)^2 + (t + w)t + zw = 0.
\end{equation}
This $Q$ is obviously $G$\,-\,invariant and has $o :=
[1:1:-2:0:0]$ as its unique singular point. Note also that $g_1$
(resp. $g_2$) acts \emph{identically} on the hyperplane $H_1 := (x
- y = 0)$ (resp. $H_2 := (t + w/2 = 0)$).

\refstepcounter{equation}
\subsection{}
\label{subsection:pr-11-2}

Let $\phi: W \map Q$ be the blowup of $\ell$. Then $\phi$ is
$G$\,-\,equivariant by construction. Set $X := W \slash G$
together with the quotient map $p: W \map X$. Let us also define
$\Delta := \phi^*\mathcal{O}_Q(1)$, $\Sigma := \phi^{-1}\ell$,
$R_i := \phi_*^{-1}H_i$. It is easy to see that $\Sigma$ consists
of two irreducible surfaces $\Sigma_1$ and $\Sigma_2$ (cf.
Remark~\ref{remark:structure-of-sigma} below).

\begin{lemma}
\label{theorem:ram-div-of-p} $R_1 \cup R_2 \cup \Sigma \subset W$
is precisely the codimension $1$ ramification locus of $p$.
\end{lemma}

\begin{proof}
Threefold $W$ (resp. $Q$) can be identified with the $\text{Proj}$
of the $\com$\,-\,algebra $A_W := \displaystyle\bigoplus_{k \ge 0}
H^0(W,k\Delta)\oplus H^0(W,k\Sigma)$ (resp. $A_Q :=
\displaystyle\bigoplus_{k \ge 0} H^0(W,k\Delta)$) so that the
inclusion $A_Q \subset A_W$ corresponds to $\phi$.

Now, considering the $G$\,-\,invariant subalgebras in $A_W,A_Q$
and taking $\text{Proj}$ we get the following commutative diagram
of $G$\,-\,equivariant morphisms:
\begin{equation}
\label{diag-imp} \xymatrix{
W\ar@{->}[d]_{\phi}\ar@{->}[r]^{p_1}&W\slash g_1\ar@{->}[d]\ar@{->}[r]^{p_2}&X\ar@{->}[d]\\
Q\ar@{->}[r]&Q \slash g_1 =
\mathbb{P}^3\ar@{->}[r]&\mathbb{P}^3\slash g_2}
\end{equation}
where the vertical (resp. horizontal) arrows signify birational
(resp. $2:1$) morphisms, $p = p_2 \circ p_1$ and $G$ acts on
$\mathbb{P}^3 = H_1$ via $g_2$ as follows:
$$
[x:z:t:w] \mapsto [-x:-z:t + w:-w].
$$
In particular, $g_2$ acts identically on the hyperplane $(t + w =
0) \subset \mathbb{P}^3$, which implies that $\mathbb{P}^3\slash
g_2 = \mathbb{P}(1,1,1,2)$. Further, consider the affine chart
$\com^4_{x,y,t,w}$ of the blowup of $\ell \subset \p^4$, where $z
= 1$ and $t,w$ are replaced by $t(x-y),w(x - y)$, respectively.
Then the local equations of $\Sigma$ in $\com^4_{x,y,t,w}$ are
(cf. \eqref{eq-Q-iii})
$$
x - y = w(x + y + 1) = 0.
$$
From this it is immediate that $g_1 = 1$ and $g_2 \ne 1$ on both
$\Sigma_i$.

It follows that the ramification divisor of the quotient morphism
$p_1: W \map W\slash g_1$ (resp. of $p_2$) is $R_1$ (resp.
$p_1(R_2) \cup p_1(\Sigma)$). This exactly means that $R_1 \cup
R_2 \cup \Sigma$ is the codimension $1$ ramification locus of $p$.
\end{proof}

Let $\alpha: X \map \mathbb{P}^3\slash g_2 = \mathbb{P}(1,1,1,2)$
be as in {\eqref{diag-imp}}. This is an extremal birational
contraction of the surface $\widetilde{\Sigma} := p(\Sigma)$
(scheme\,-\,theoretic image). There is another extremal
contraction on $X$ which we now describe (this will be used later
when computing $ac(X)$).

\begin{remark}
\label{remark:structure-of-sigma} After an appropriate coordinate
change one may assume that $Q = (xy - zt = 0)$ and $\ell = (x = y
= z = 0)$. Then a simple local computation shows that $\Sigma =
\mathbb{F}_1 \cup \p^2$, with $\mathbb{F}_1 \cap \p^2 = $ the
$(-1)$\,-\,curve on $\mathbb{F}_1$, and $W$ has at most ordinary
double points. This implies in particular that the Picard number
of a $\mathbb{Q}$\,-\,factorialization of $W$ equals $4$.
Similarly, since $\mathrm{Pic}(X) = \mathrm{Pic}(W)^G$ (the
$G$\,-\,invariant part), the Picard group of $X$ is generated by
$p(\Delta)$ and (reducible) $\widetilde{\Sigma}$, while the class
group of $X$ has rank $3$ because two of the
(non\,-\,$\mathbb{Q}$\,-\,Cartier) irreducible components of the
divisor $\phi_*^{-1}(w = 0)$ are identified under $p$.
\end{remark}

\refstepcounter{equation}
\subsection{}
\label{subsection:pr-11-3}

The blowup $\phi$ resolves indeterminacies of the projection $Q
\dashrightarrow \mathbb{P}^2$ from $\ell$. Then, since $\rho(W) =
2$, it is plain that the induced morphism $\pi: W \map \p^2$ is a
$G$\,-\,equivariant extremal contraction.

Further, we put $L := \Delta - \Sigma$, so that $\pi$ is given by
the linear system $|L|$. It follows from the construction of
$\p^2$ and $g_i$ in {\ref{subsection:pr-11-1}} that $G$ acts
\emph{faithfully} on $\p^2 = \pi(W)$. Then we have $\p^2\slash G
\simeq\p^2$ and the quotient morphism $\p^2 \map \p^2\slash G$ is
given by a linear subsystem $\mathcal{L} \subset |2\pi_*L|$. In
particular, $\mathcal{L}$ consists of certain $G$\,-\,invariant
global sections of $2\pi_*L$, and hence identifying
$\pi^*\mathcal{L}$ with a linear system on $X$ we get a morphism
$\pi_X: X \map \p^2$ (the quotient of $\pi$) which fits into a
commutative diagram
\begin{equation}
\nonumber \xymatrix{
W \ar@{->}[d]_{\pi}\ar@{->}[r]^{p}&X\ar@{->}[d]^{\pi_X}\\
\p^2\ar@{->}[r]&\mathbb{P}^2\slash G = \p^2}
\end{equation}
This $\pi_X$ is the second extremal contraction on $X$ (cf.
Remark~\ref{remark:structure-of-sigma}).

We observe next that the surface $L' := (\phi^*w = 0) - 2\Sigma$
on $W$ is $G$\,-\,invariant and the corresponding $G$\,-\,action
is \emph{faithful} on it (cf.
{\ref{subsection:pr-11-1}}).\footnote{~To simplify the notation we
treat $w$ and $z$ below as global sections of $\mathcal{O}_Q(1)$.}
This implies that $\deg p\big\vert_{L'} = 4$ and thus we get
$p_*L' \equiv 4\widetilde{L'}$ for $\widetilde{L'} := p(L')$.
Similarly, for the surface $\Delta' := (\phi^*z = 0)$ (resp. for
$\Sigma$) we have $p_*\Delta' \equiv 4\widetilde{\Delta'}$ (resp.
$p_*\Sigma \equiv 4\widetilde{\Sigma}$), where
$\widetilde{\Delta'} := p(\Delta')$ (resp. $\widetilde{\Sigma} :=
p(\Sigma)$). Note that $\Delta' \sim \Delta$ because $\ell
\not\subset (z = 0)$ (cf. {\ref{subsection:pr-11-2}}).

Finally, since both $R_i \in |L|$, from
Lemma~\ref{theorem:ram-div-of-p} we obtain $\widetilde{R_i} :=
p_*R_i \equiv 2\widetilde{L} := p(L)$, and the linear system
$|2\widetilde{L}|$ is basepoint\,-\,free on $X$ (for it is the
$\pi_X^*$ of a free linear system on $\p^2$).

\begin{lemma}
\label{theorem:ac-for-x-0-lcjcj} The pair $(Q, \phi_*\Delta' +
\phi_*L')$ is lc.
\end{lemma}

\begin{proof}
Note that both $(Q, \phi_*\Delta')$ and $(Q,\phi_*L')$ are lc via
lifting these pairs to a small resolution of $Q$. Hence by the
Inversion of Adjunction it suffices to prove that $(\phi_*\Delta',
\phi_*L'\big\vert_{\phi_*\Delta'})$ is lc. But it follows from
{\ref{subsection:pr-11-1}} that $\phi_*\Delta' = (z = 0) \cap Q$
is smooth and the pair $(\phi_*\Delta',
\phi_*L'\big\vert_{\phi_*\Delta'})$ is locally analytically of the
form $(\com^2, (xy = 0))$.
\end{proof}

\begin{lemma}
\label{theorem:ac-for-x-0} $ac(X,D) = \displaystyle\frac{3}{4}$
for an appropriate $D$ (cf. {\ref{subsection:int-3}}).
\end{lemma}

\begin{proof}
In the previous notation, we can write
$$
-K_W = \Sigma + \Delta' + L' + \Delta'',
$$
where $\Delta''\sim \Delta$ is generic (this follows from the fact
that $-K_Q = \phi_*(\Delta' + L' + \Delta'')$ and $K_W = \phi^*K_Q
+ \Sigma$).

Note that the pair $(W,\Sigma + \Delta' + L' + \Delta'')$ is lc,
since
$$
K_W + \Sigma + \Delta' + L' = \phi^*(K_Q + \phi_*\Delta' +
\phi_*L')
$$
by construction, the pair $(Q, \phi_*\Delta' + \phi_*L')$ is lc by
Lemma~\ref{theorem:ac-for-x-0-lcjcj}, $\Delta''$ is generic and
the linear system $|\Delta|$ is basepoint\,-\,free.

Further, from the Hurwitz formula (applied twice) we get
$$
K_W \equiv p^*\big(K_X + \frac{1}{2}\widetilde{R_1} +
\frac{1}{2}\widetilde{R_2} + \frac{1}{2}\widetilde{\Sigma}\big)
$$
(cf. Lemma~\ref{theorem:ram-div-of-p}), which gives
\begin{equation}
\nonumber -2\widetilde{\Sigma}
-4(\widetilde{\Delta'}+\widetilde{L'}) - \widetilde{\Delta''} =
p_*(K_W) \equiv 4(K_X + \frac{1}{2}\widetilde{R_1} +
\frac{1}{2}\widetilde{R_2} + \frac{1}{2}\widetilde{\Sigma}),
\end{equation}
where $\widetilde{\Delta''} := p(\Delta'')$. Then the pair
$$
\big(X,D:=\widetilde{\Sigma}+\widetilde{\Delta'}+\widetilde{L'} +
\frac{1}{4}\widetilde{\Delta''} + \frac{1}{2}\widetilde{R_1} +
\frac{1}{2}\widetilde{R_2}\big)
$$
is also lc because
$$
K_W + \Sigma + \Delta' + L' + \Delta'' \equiv 0 \equiv p^*(K_X +
D).
$$
This gives $ac(X,D) = \displaystyle\frac{3}{4}$ (cf.
Remark~\ref{remark:structure-of-sigma}).\footnote{~Recall that
according to {\ref{subsection:int-2}} we do not distinguish
between $X$ and its $\mathbb{Q}$\,-\,factorialization. Note also
that the divisor $\widetilde{\Sigma}$ is \emph{reducible} and
consists of two non\,-\,$\mathbb{Q}$\,-\,Cartier components.}
\end{proof}

\begin{remark}
\label{remark:no-tor-sl} One can easily modify the pair $(X,D)$
from the proof of Lemma~\ref{theorem:ac-for-x-0} in order to
achieve $ac(X) = 0$ (thus disproving
Conjecture~\ref{theorem:slava} as well). Namely, since
$|2\widetilde{L}|$ is basepoint\,-\,free, we may replace
$\displaystyle\frac{1}{2}\widetilde{R_1} +
\displaystyle\frac{1}{2}\widetilde{R_2}$ by $\widetilde{R_1}$,
say. Then it follows from \cite[Proposition 4.3.2]{pro-comp}
(applied to $\alpha: X \map \p(1,1,1,2)$ as in
{\ref{subsection:pr-11-2}}) that the modified pair $(X,D)$ is
$1$\,-\,complementary. Alternatively, one may notice that
$\alpha_*\widetilde{L'} \equiv \mathcal{O}(1) \equiv
\alpha_*\widetilde{\Delta'}$ for the generator $\mathcal{O}(1)$ of
the class group of $\p(1,1,1,2)$, which easily yields an integral
$D$ with $ac(X,D) = 0$.
\end{remark}

Theorem~\ref{theorem:main-1} (in the $n=3$ case) now follows from
the next

\begin{prop}
\label{theorem:non-toric} Threefold $X$ is
non\,-\,toric.\footnote{~Compare with \cite[Lemma 3.4]{pro-2}.}
\end{prop}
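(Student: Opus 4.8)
The plan is to prove non-toricity by exhibiting an intrinsic invariant of $X$ that every toric $3$-fold must satisfy but $X$ does not. Since $X$ arises from the quadric $Y\subset\p^4$ by the composition $p: W\map X$ of two \emph{non-commuting} $\cel\slash 2\cel$-covers (cf. \eqref{g-1-ac-lll}, \eqref{g-2-ac-lll} and Remark~\ref{remark:descri-of-x}), the natural strategy is a \emph{fan-theoretic} one, as announced in {\ref{subsection:int-5}}. First I would suppose, for contradiction, that $X$ is toric, with big torus $\mathbb{T}$ and fan $\Delta_X\subset N_{\re}$. The two extremal contractions furnished by Lemmas~\ref{theorem:l-0-tilde-b-p-free} and~\ref{theorem:l-0-tilde-b-p-f} --- namely $\alpha:X\map\p(1,1,1,2)$ (contracting $\widetilde{\Sigma}$) and the morphism $X\map\p^2$ given by $|\widetilde{R_1}|$ --- would then both be toric morphisms, because on a $\ra$-factorial toric variety every extremal contraction is torus-equivariant (the associated extremal ray is spanned by a torus-invariant curve). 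Hence their targets $\p(1,1,1,2)$ and $\p^2$ inherit compatible toric structures, and the divisors $\widetilde{\Sigma}, \widetilde{\Delta}, \widetilde{L'}, \widetilde{R_1}$ appearing in $D$ would all be $\mathbb{T}$-invariant.

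Next I would analyze the local structure of $X$ along $\widetilde{\Sigma}$ and at the images of the ramification loci. The key numerical input is collected in \eqref{can-class-form} and \eqref{can-class-fo}, together with the branch data: $p$ factors as $W\map W_1\map X$ with ramification divisors $R_1$ and (the image of) $R_2$, each contributing coefficient $\tfrac12$ to the different, producing the $\tfrac12(L_1+L_2+L_3)$ in $D$ and the relations $\widetilde{R_1}\equiv 2\widetilde{L'}\equiv\widetilde{R_2}$. On a toric $X$, the quotient structure would have to be realized by a map of fans, so the branch divisors $\widetilde{R_1},\widetilde{R_2}$ and their mutual intersection pattern would be dictated by toric geometry (intersections of invariant divisors are determined by the cones they span). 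The plan is to compute $\widetilde{R_1}\cdot\widetilde{R_2}$ (or an analogous intersection/discrepancy datum) explicitly by pulling back to $Y\subset\p^4$ via Remark~\ref{remark:descri-of-x}, and to show that the resulting configuration is \emph{incompatible} with any arrangement of invariant divisors in a fan --- concretely, the non-commutativity of $G_1,G_2$ forces a ``twist'' in how the two branch divisors meet that cannot be flattened into a toric (hence combinatorially rigid, abelian) picture. Equivalently, one shows the formal neighborhood of $\widetilde{\Sigma}$, or of the point $\alpha(\widetilde{\Sigma})\in\p(1,1,1,2)$, is not of the form $(\com^3\ni 0)/\frak{A}$ for a diagonal abelian $\frak{A}$ (which, by Example~\ref{example:ex-1}, is what toricity would demand locally).

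The main obstacle I expect is pinning down the precise \emph{local} toric constraint that fails, and verifying that it genuinely fails rather than merely looking suspicious: the difficulty is that toric $3$-folds admit a rich supply of quotient singularities, so one cannot simply invoke ``$X$ has a bad singularity.'' The contradiction must come from the \emph{global} compatibility of the two contractions $\alpha$ and $X\map\p^2$ with a \emph{single} fan, exploiting that the two $\cel\slash 2\cel$-actions do not commute. I would therefore compare the monomial/weight data of the two Galois covers --- the first cover has ramification governed by $x_0\mapsto -x_0$ and the second by the \emph{affine} substitution $x\mapsto x+x_0$ in \eqref{g-2-ac-lll} --- and observe that the additive term $x+x_0$ is precisely the non-toric feature: a torus-equivariant cover is given by a sublattice inclusion $N'\hookrightarrow N$ with the branch locus a union of coordinate-type invariant divisors, forcing the deck transformation to be \emph{monomial} (diagonal), never of the affine form $x\mapsto x+x_0$. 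Tracing this through the two-step quotient, the resulting branch configuration on $X$ cannot be matched by invariant divisors meeting along toric strata, giving the desired contradiction and proving $X$ non-toric.
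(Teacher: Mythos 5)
Your opening move --- if $X$ is toric then the extremal contractions of Lemmas~\ref{theorem:l-0-tilde-b-p-free} and~\ref{theorem:l-0-tilde-b-p-f} are torus-equivariant, so the fan of $X$ refines that of $\p(1,1,1,2)$ --- is exactly how the paper's proof begins, and your instinct that the contradiction must be global rather than local is also right. But the mechanism you propose for deriving the contradiction has a genuine gap. You argue that toricity of $X$ would force the cover $p:W\map X$ to be compatible with the torus, so that its deck transformations must be monomial, contradicting the affine substitution $x\mapsto x+x_0$ in \eqref{g-2-ac-lll}. This is a non-sequitur: toricity is a property of $X$ alone, and a given finite cover of a toric variety need not be torus-equivariant, nor need its branch divisors be torus-invariant. (Example: the double cover $\p^1\times\p^1\map\p^2$ branched along a smooth conic; both varieties are toric, yet the cover is not equivariant and the branch divisor is not invariant. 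A quotient presentation with ``non-monomial'' deck data never by itself disproves toricity.) Nor can the branch divisors $\widetilde{R_1},\widetilde{R_2}$ be pinned down intrinsically: a double cover is \emph{smooth} along its branch divisor, so these divisors are invisible to the singular locus, and nothing forces a hypothetical torus on $X$ to preserve them --- so the intersection number $\widetilde{R_1}\cdot\widetilde{R_2}$ you propose to compute carries no toric obstruction. Your fallback local argument cannot work either: $X$ is a finite quotient of the smooth $W$ by $\cel\slash_{\displaystyle 2\cel}$-actions, so every singularity of $X$ is formally an abelian quotient singularity --- of type $\com^*\times\frac{1}{2}(1,1)$ along the curve $p(\Sigma\cap L')$ and $\frac{1}{2}(1,1,1)$ at four points --- hence locally toric; no formal-neighborhood obstruction of the kind you describe exists.

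What the paper actually does is use the one datum that \emph{is} intrinsic, namely the singular locus, and then count. The singular locus of $X$ consists of the curve $\widetilde{F}=p(\Sigma\cap L')$ together with four $\frac{1}{2}(1,1,1)$-points $A_1,\dots,A_4$, of which $A_3,A_4$ do not lie on $\widetilde{F}$. If $X$ were toric, $\widetilde{F}$ would be a $2$-stratum of a cone $\sigma$ of the fan $\Lambda$, the ray corresponding to $\widetilde{\Sigma}$ would lie on the face of $\sigma$ over $\alpha(\widetilde{F})$, and $\Lambda$ would subdivide the fan of $\p(1,1,1,2)$. The combinatorics of this picture force $A_3,A_4\notin U_{\sigma}$ and $A_3,A_4\notin\widetilde{\Sigma}$; since $\alpha$ contracts only $\widetilde{\Sigma}$, it is an isomorphism near $A_3,A_4$ and sends them to \emph{two distinct} singular points of $\p(1,1,1,2)$ --- absurd, because $\p(1,1,1,2)$ has exactly one singular point. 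That counting contradiction (two extra $\frac{1}{2}(1,1,1)$-points upstairs versus a single singular point downstairs) is the step your proposal never reaches; the non-commutativity of $G_1,G_2$ enters only through its geometric consequence, the existence and position of these singular points, not through any constraint on deck transformations.
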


\begin{proof}
Assume the contrary. Recall that $\Sigma = \mathbb{F}_1 \cup
\p^2$. Note also that both ramification divisors $R_i \sim L$
contain the $(-1)$\,-\,curve $e \subset \mathbb{F}_1$ (this curve
lies in a fiber of $\pi$ and the claim follows by descending
everything to $\p^2 = \pi(W)$). In particular, $e$ is acted
trivially by $G$, which implies that $X$ has singularities of the
form $\com^*\times\com^2\slash G$ at the general point of $p(e)$.

Further, if $h \subset \mathbb{F}_1$ is the tautological section,
then we get two more singularities $A_i := R_i \cap \Delta' \cap
h$ of type $\com^*\times\com^2\slash G$, \emph{not} contained in
$p(e)$.

Let $N\simeq\cel^3$ and $\Lambda\subset N\otimes_{\cel}\re$ be the
lattice and the fan, respectively, associated with $X$. Set also
$M := \text{Hom}(N,\cel)$. Fix a cone $\sigma\subset\Lambda$, with
associated affine toric neighborhood
$U_{\sigma}:=\spe~\com\left[\sigma^{\vee}\cap M\right]$ on $X$, so
that $p(e)$ is identified with a $2$\,-\,stratum on $\sigma$. Let
also $\alpha:X\map\p(1,1,1,2)=:\p$ be as at the end of
{\ref{subsection:pr-11-2}}.

By assumption, morphism $\alpha$ is toric,
$\alpha(\widetilde{\Sigma})\subset\p$ is a ``\,line\,'' and
$\Lambda$ looks like as on Figure~\ref{fig-1}.
\begin{figure}
\includegraphics[scale=1.2]{tes.1}
\caption{} \label{fig-1}
\end{figure}
Here $\text{Supp}\,\Lambda$ coincides with the support of the fan
$\Lambda_{\p}$ of $\p$ (i.\,e. $\Lambda$ subdivides
$\Lambda_{\p}$). Note also that vector $OC$ corresponds to the
surface $p(\mathbb{F}_1)$ and the face of $\sigma$ containing $OC$
corresponds to $p(e)$.

Now, since $A_1,A_2\not\in p(e)$, we must also have
$A_1,A_2\not\in U_{\sigma}$. But the latter is impossible for $\p$
containing only one singular point.
Proposition~\ref{theorem:non-toric} is proved.
\end{proof}

In order to prove Theorem~\ref{theorem:main-1} for any $n\ge 4$ it
suffices to take $\frak{X} := X \times (\p^1)^{n-3}$. From
Proposition~\ref{theorem:non-toric} one easily derives that
$\frak{X}$ is non\,-\,toric. At the same time, we have
$ac(\frak{X},\frak{D}) = \displaystyle\frac{3}{4}$, where
$\frak{D}$ is the pullback to $\frak{X}$ of the boundary $D$ on
$X$ with $ac(X,D) = \displaystyle\frac{3}{4}$ (cf.
Lemma~\ref{theorem:ac-for-x-0}), plus the sum of general divisors
= pullbacks w.\,r.\,t. the projection $\frak{X}\map (\p^1)^{n-3}$,
so that $K_{\frak{X}}+\frak{D}\equiv 0$.

\refstepcounter{equation}
\subsection{}
\label{subsection:e-5as}

Fix some $X\in\frak{T}^{f,n}$ (cf.
Definition~\ref{theorem:f-toric}). We conclude by asking the
following questions:

\renewcommand{\theenumi}{\Alph{enumi})}

\begin{enumerate}
\item\label{1-ita} Can $X$ always be obtained as (an extremal contraction$\slash$a blowup of) the quotient
$\p(\ve)/G$ for some toric variety $T$, (semistable) vector bundle
$\ve$ on $T$ and a finite group $G\circlearrowright\p(\ve)$?
\smallskip
\item Does $X$ admit a regular $(\com^*)^k$\,-\,action for some $k\ge
1$ (cf. \cite{prok-del-pez})?
\smallskip
\item Is $X$ a compactification of the torus $(\com^*)^n$ (cf. {\ref{subsection:int-1}} and Remark~\ref{remark:formal-rel})?
\smallskip
\item Is $X$ a Mori dream space$\slash$an FT variety$\slash\ldots$ (see \cite{hu-keel}, \cite{takagi-et-al})?
\end{enumerate}

\bigskip

\renewcommand{\thesubsection}{\bf A.\arabic{equation}}

\renewcommand{\theequation}{A.\arabic{equation}}

\renewcommand{\thesection}{}

\section{}

\section*{Appendix}

Below are a few comments on the paper \cite{brown} (published
recently in Duke Math. J.) In {\ref{subsection:comsds-1}},
{\ref{subsection:comsds-2}}, we show that the proof of
Conjecture~\ref{theorem:slava} given in that paper is
inconsistent.

\refstepcounter{equation}
\subsection{}
\label{subsection:comsds-1}

The most crucial error in \cite{brown} (destroying the whole
argument basically) is at the end of Section 3 on page 25 (two
last paragraphs).

Namely, the authors consider a commutative ring $R$, which is of
dimension $d$ and is acted (faithfully) by the torus
$(\mathbb{C}^*)^r$, some $d
> r$. They claim that once there are $d$ $(\mathbb{C}^*)^r$\,-\,invariant
divisors $T_i$ on $Y = \text{Spec}(R)$ such that the pair $(Y,T_1
+ ... + T_d)$ is log canonical then $R$ must be the polynomial
ring in $d$ variables.

This is totally wrong however.

{\bf N.B.} {\it In fact the authors consider some $R =
\text{Cox}(X)$ and assume silently that $R$ is {\rm positively
graded}. The latter is never proved in the paper (the claim itself
is wrong actually). This, at least, reveals a huge gap in their
argument.}

Let $\mathbb{C}^*$ act on $(x,y,z)$ as follows:
$$(x,y,z) \to (a^{-3}x,ay,z)$$
for all $a \in \mathbb{C}^*$. Now consider the
$\mathbb{C}^*$-invariant surface $Y \subset \mathbb{C}^3$, given
by the equation
$$
xy^3 = z^2 + 1,
$$
and two divisors $T_1 = (x = z - \sqrt{-1} = 0), T_2 = (y = z -
\sqrt{-1} = 0)$ on $Y$. Then all the needed conditions as above
are met. At the same time, we have $Y \ne \mathbb{C}^2$, since the
surface $Y$ has Makar-Limanov invariant equal $\mathbb{C}[y]$ (cf.
\cite[Example 3]{arzh}).

\refstepcounter{equation}
\subsection{}
\label{subsection:comsds-2}

Another (less crucial but still) illustration is the ``\,proof\,"
of Proposition 7.2 in the text.

There the authors appeal to rationality criterion for \emph{any}
(\emph{singular}) conic bundle $3$\,-\,fold via Prym.

{\bf N.B.} {\it Note that their conic bundle $X$ is indeed
singular because $X = Y/(\mathbb{Z}/2)$ for some smooth
$3$\,-\,fold $Y$ and $\mathbb{Z}/2$ acting on it with some $\dim =
1$ fixed locus.}

This criterion is only known for \emph{standard} (in particular
\emph{smooth}) conic bundles and one wonders where did the authors
take from the same fact in the general case. OK, one can consider
\emph{$\mathbb{Q}$\,-\,conic bundles}, and then reduce everything
to the standard case (Sarkisov). But this requires at most
\emph{terminal} singularities --- which is not the case for the
given $X$. (In fact the conclusion of Proposition 7.2 is false ---
see Theorem~\ref{theorem:main} above.)

\bigskip

\bigskip

% {\bf Conflict of interest statement}. The author has no conflicts
% of interest to declare.

\bigskip

\end{document}